\newtheorem{theorem}{Theorem}[section]
\newtheorem{prop}[theorem]{Proposition}
\newtheorem{claim}[theorem]{Claim}
\newtheorem{fact}[theorem]{Fact}
\newtheorem{lemma}[theorem]{Lemma}
\theoremstyle{definition}
\newtheorem{defn}[theorem]{Definition}
\newtheorem{example}[theorem]{Example}
\theoremstyle{remark}
\newtheorem{remark}[theorem]{Remark}
\newcommand{\la}{\langle}
\newcommand{\ra}{\rangle}
\newcommand{\sub}{\subseteq}
\newcommand{\pf}{\proof}
\def\Star{\mathrm{Star}}
\def\st{\mathrm{st}}
\newcommand{\cal}[1]{\ensuremath{\mathcal{#1}}}
\newcommand{\res}{\ensuremath{\upharpoonright}}
\newcommand{\ve}{\ensuremath{\varepsilon}}
\newcommand{\lam}{\ensuremath{\lambda}}
\newcommand{\Q}{\mathbb{Q}}
\begin{document}

\author {Pantelis  E. Eleftheriou}

\address{Department of Mathematics and Statistics, University of Konstanz, Box 216, 78457 Konstanz, Germany}

\email{panteleimon.eleftheriou@uni-konstanz.de}

\title{Semi-linear stars are contractible}
\date{\today}

\thanks{Research supported by an Independent Research Grant from the German Research
Foundation (DFG) and a Zukunftskolleg Research Fellowship.\newline
{\it Keywords and phrases:} O-minimal structures, special linear decompositions, stars, deformation retractions.}

\subjclass[2010]{03C64}

\begin{abstract}
Let $\cal R$ be an ordered vector space over an ordered division ring.
We prove that every definable set $X$ is a finite union of relatively open definable subsets which are definably simply-connected, settling a conjecture from \cite{eep1}. The proof goes through the stronger statement that the star of a cell in a special linear decomposition of $X$ is definably simply-connected. In fact, if the star is bounded, then it is definably contractible.
\end{abstract}

 \maketitle


\section{Introduction}
This paper deals with the general problem of covering definable sets in an o-minimal structure \cal R with  topologically nice subsets. It was first proved by Wilkie \cite{wilkie} that if \cal R expands an ordered field, then every bounded open definable set $X$ is a finite union of open cells. By Andrews \cite{andrews} and Edmundo-Eleftheriou-Prelli \cite{eep2}, the statement also holds if \cal R expands an  ordered group, but not a field (and, in fact, without assuming $X$ is bounded). In \cite{eep1}, some strong consequences of the above covering statements were derived and applied to the study of locally definable manifolds in \cal R. For instance, the existence of universal locally definable covering maps and the invariance of  o-minimal fundamental groups were established. Moreover, it was explained there how a positive solution to the following conjecture is crucial in extending those consequences to a much wider context (such as that of locally definable spaces).\\

\noindent\textbf{Conjecture (\cite[Section 5]{eep1}).} \emph{Every definable set $X$ is a finite union of relatively open definable subsets which are definably simply-connected.}\\

\noindent The virtue of the above statement over the  known aforementioned results is that $X$ is not assumed to be open. Later  in the introduction, we indicate why this weakening increases significantly the complexity of its proof. 

In this paper, we prove the conjecture in the semi-linear setting; that is, when \cal R is a pure ordered vector space.   As pointed out in \cite[Section 5]{eep1}, if \cal R expands an ordered field,  then the conjecture can be replaced by
(and, in fact, follows from)
the known triangulation theorem. The triangulation theorem fails in the semi-linear setting (see, for example, \cite[Example 1.1]{el-bsl}), creating a number of new difficulties and intricacies that do not exist in the field case. Indeed, by now semi-linear geometry has developed into a separate subject admitting a totally different set of techniques (see  \cite{el-affine} and \cite{lp}). In this paper, we advance semi-linear homotopy theory, featuring the notion of \emph{canonical retractions} of linear cells, which we  use to settle the conjecture in our setting. As a note, the  conjecture remains open in the `intermediate', semi-bounded case, where \cal R expands an ordered group  by a field whose universe is a bounded interval (\cite{ed-str}).

For the rest of this paper, we fix an ordered vector space  $\cal R=\la R, <, +, 0, \{x\mapsto \lam x\}_{\lam\in \Lambda}\ra$    over an ordered division ring $\Lambda$.
By `definable'  or `semi-linear' set, we mean a set definable in \cal R, possibly with parameters.  
The first-order theory of $\cal R$ is  well-understood (see, for example, \cite[Chapter 1, \S 7]{vdd-book} and \cite{elst}). In particular, $\cal R$ is o-minimal, and one can define the notion of a \emph{linear cell} and prove a linear cell decomposition theorem for all definable sets and functions (the precise terminology is postponed until Section \ref{preliminaries}).
It is worth noting that if $\cal R$ is the real vector space, then semi-linear geometry amounts to classical PL-topology. Our setting, however, includes far more general structures, whose role becomes increasingly prominent in areas such as that of valued fields and  non-archimedean tame topology (Hrushovski-Loeser \cite{hl}). Indeed, given an algebraically closed valued field $K$, its value group $\Gamma$ is known to be an ordered vector space over $\Q$, and  in \cite{hl}  strong connections between algebraic varieties over $K$ and the topology of $\Gamma$ were established. For example, the above authors proved in \cite[Theorem 13.1.3]{hl} an equivalence of categories between a certain homotopy category of definable subsets of quasi-projective varieties over $K$ and a  homotopy category of definable spaces over $\Gamma$.




We now proceed to describe the content and subtleties of this paper. The following definition is  essentially \cite[Definition 2.2]{eep2}.\\

\noindent\textbf{Definition.} (Stars) Let $X$ be a definable set,  $\mathcal{C}$ a linear  decomposition of $X$ and $C\in \cal C$.
The \emph{star of $C$ in $X$ with respect to $\mathcal{C}$}, denoted by $\st _{\mathcal{C}}(C)$, is the set
$$\st _{\mathcal{C}}(C)=\bigcup \{D\in \mathcal{C}: C \cap cl(D)\ne\emptyset\}.$$

\noindent The main difference from \cite[Definition 2.2]{eep2} is that here $\cal C$ is a linear decomposition of any definable set $X\sub R^n$ instead of just $R^n$.
 It was proved in \cite[Proposition 2.17]{eep2} that if $\cal C$ is a \emph{special} linear decomposition of $R^n$, then $\st_{\cal C}(C)$ is an open (usual) cell. This implied the conjecture for an open definable set $X$. Indeed, if $\cal C$ is chosen to partition $X$, then  the collection of all stars of  cells in $X$ provide a covering of $X$ by open cells. To check moreover that a cell $E$ is definably simply-connected,  observe first that by Berarducci-Fornasiero \cite[Lemma 3.2]{bf}, a bounded cell is definably contractible.
Then it is not hard to see that every definable loop in $E$ is contained in a bounded cell $B\sub E$ and hence it is definably homotopic (in $B$) to a constant loop (see Claim \ref{contractible1} below). 


In order to apply the same strategy and prove the conjecture for any definable set $X\sub R^n$ (not necessarily open), we consider the star $\st_{\cal C}(C)$ of a cell $C\sub X$ in a special linear decomposition $\cal C$ of $X$. For example, $\cal C$ could be the restriction to $X$ of a special linear decomposition $\cal C'$ of $R^n$ that partitions $X$. In that case,
$$\st_{\cal C}(C)= \st_{\cal C'}(C)\cap X.$$
The main obstacle now is that $\st_{\cal C}(C)$ need not be a cell anymore, and so it is far from clear whether it is contractible. For example, if we consider the definable contraction of $\st_{\cal C'}(C)$ from \cite[Lemma 3.2]{bf} (assuming  further it is bounded),  its restriction to $\st_{\cal C}(C)$ need not stay inside $\st_{\cal C}(C)$. As it turns out, proving the contractibility  of $\st_{\cal C}(C)$  amounts to proving the following general independent statement.\\



\noindent\textbf{Proposition 1.} \emph{Let $Y\sub R^n$ be a bounded definable set, $C\sub Y$ a linear cell, and $\cal D$ a special linear decomposition of $Y$ that contains $C$. Assume that
$$\forall D\in \cal D,\,\, C\cap cl(D)\ne \emptyset.$$
Then $Y$ is definably contractible.}\\

\noindent Proposition 1 is the heart of this paper and  is proved in Section \ref{sec-thms}. In the rest of this introduction we summarize its consequences and describe how it leads to the solution of the conjecture. We also sketch the main idea behind its proof. 

\smallskip
An immediate consequence of Proposition 1 is the following theorem.\\

\noindent\textbf{Theorem A.} \emph{Let $X\sub R^n$ be a definable set, $\cal C$ a special linear decomposition of $X$, and $C\in \cal C$ with $C\sub X$. If $\st_{\cal C}(C)$ is bounded, then $\st_{\cal C}(C)$ is definably contractible.}

\begin{proof}
Let $\cal D=\{D\in\cal C: C\cap cl(D)\ne\emptyset\}.$ By Proposition 1, $\st_{\cal C}(C) = \cup \cal D$ is definably contractible.
\end{proof}


\noindent In order to establish the conjecture, we need to get rid of the boundedness assumption, much alike we did in the case of an open set $X$. This is achieved in Lemma \ref{formain2}, which, together with Proposition 1, implies: \\

\noindent\textbf{Proposition 2.} \emph{Let $Y\sub R^n$ be a definable set, $C\sub Y$ a linear cell, and $\cal D$ a special linear decomposition of $Y$ that contains $C$. Assume that
$$\forall D\in \cal D,\,\, C\cap cl(D)\ne \emptyset.$$
Then $Y$ is definably simply-connected.}\\

\noindent Proposition 2  implies our second theorem.

$ $\\
\noindent\textbf{Theorem B.} \emph{ Let $X$, $\cal C$ and $C$ be as in Theorem A. Then $\st_{\cal C}(C)$ is definably simply-connected.}

\begin{proof}
Let $\cal D=\{D\in\cal C: C\cap cl(D)\ne\emptyset\}.$ By Proposition 2, $\st_{\cal C}(C) = \cup \cal D$ is definably simply-connected.
\end{proof}

\noindent As a corollary, we settle the conjecture in the pure vector space case:\\

\noindent\textbf{Corollary.}
\begin{enumerate}
   \item \emph{Every definable set is a finite union of relatively open definable subsets which are definably simply-connected.}

\item \emph{Every bounded definable set is a finite union of relatively open definable subsets which are definably contractible.}
 \end{enumerate}

\begin{proof}
Let $X$ be a definable set,  $\cal C'$  a special linear decomposition of $R^n$ partitioning it, and $\cal C$ its restriction to $X$. Then $X$ is clearly the finite union of all stars $\st_{\cal C}(C)=\st_{\cal C'}(C)\cap X$, for $C\in \cal C$. Since each $\st_{\cal C'}(C)$ is open (\cite[Lemma 2.16]{eep2}), $\st_{\cal C}(C)$ is relatively open in $X$. It is also definably simply-connected, by Theorem B. If, moreover, $X$ is bounded, then $\st_{\cal C}(C)$ is also definably contractible, by Theorem A.
\end{proof}

\noindent\emph{The main idea of the proof of Proposition 1.} The next  two paragraphs involve notions that are defined in Section \ref{preliminaries}, and their reading may be postponed until later.
Our strategy is to construct, for each $D\in \cal D$, a \emph{canonical retraction} of $cl(D)$ to the closure of the \emph{half-cell} $C'\sub C$. The virtue of such a retraction is two-fold. First, its restriction to $C\cup D$ is a deformation retraction of $C\cup D$ to $C'$ (Lemma \ref{Hinside} and Claim \ref{Hinside2}). Second, if $E$ is  another cell in $\cal D$, contained in the boundary of $D$, then the canonical retraction of $cl(D)$ to $cl(C')$ extends that of $cl(E)$ to $cl(C')$ (Lemma \ref{extend}). As a consequence, we can combine the above  retractions together
and obtain a deformation retraction of $Y$ to $C'$ (Proposition \ref{main1}). Finally, we observe that $C'$ is definably contractible (Lemma \ref{contract}).


The canonical retraction is in fact given relative to a \emph{corner} $c$ of $C$. To simplify the presentation, we first define it  for a \emph{canonical} linear cell $D$, a \emph{face} $C$ of $D$ and $c=0$ (Definition \ref{def-cancontr}). For an arbitrary bounded linear cell, the construction is deferred to Definition \ref{def-cangen}. Definition \ref{def-cancontr} is by recursion on $n$ and runs in parallel with Claim \ref{indeed}, where we prove that, at the recursive step, the resulting map $H_n$ is indeed a deformation retraction with the required properties. The definition is rather intricate and to facilitate its reading we illustrate it with Example \ref{ex1}.
The choice of our construction, and especially of retracting $Y$ to $C'$ as opposed to $C$, is given an explanation in Remark \ref{explainC'}.\\

\noindent\emph{Structure of the paper.} In Section \ref{preliminaries}, we introduce our terminology and prove some basic facts.
In Section \ref{sec-cancontr}, we give the construction of a canonical retraction.  In Section \ref{sec-thms}, we conclude the proofs of Propositions 1 and 2.\\


\noindent\emph{Acknowledgements.} I thank the referee for suggesting many improvements on the original manuscript.

\section{Preliminaries}\label{preliminaries}

Let us first fix some notation of this paper.
By $0$ we denote the origin of the space at hand. We let $R^{0}=\{0\}$. We also denote by $0:X\to R$ the map $0(x)=0$, whereas by $1_X$ we denote the identity map on $X$. We  write $[a, a]$ for $\{a\}$, and $Im(f)$ and $\Gamma(f)$ for the image and graph, respectively, of a function $f$. By a \emph{box} we mean a bounded set of the form
$$B=(a_1, b_1)\times \dots \times (a_n, b_n),$$
where $a_i<b_i\in R$. If $m\le n$, then $\pi_m:R^n\to R^m$ denotes the projection onto the first $m$ coordinates. We write $\pi$ for $\pi_{n-1}$. If \cal C is a collection of sets in $R^n$, by $\pi_m(\cal C)$ we mean the collection of their projections on $R^m$. If $Y\sub R^n$ is a definable set, then the restriction of $\cal C$ to $Y$ is the collection of sets $\{C\cap Y:\, C\in \cal C\}$. If $\sigma=(j_1, \dots, j_n)$ and $\tau=(i_1, \dots, i_n)$ are in $\{0, 1\}^n$, then $\sigma\le \tau$ (respectively, $\sigma<\tau$) means that for every $m$, $j_m\le i_m$ (respectively, $j_m<i_m$). If $a\in R$ and $X\sub R$, then $a<X$ means that $a<x$ for all $x\in X$. We denote by $cl(X)$ the topological closure of a set $X$.

\subsection{Special linear decompositions and stars}

We recall some basics for semi-linear sets, revisit special linear decompositions and stars from \cite{eep2}, and prove a few simple facts. A  function $f: R^n\to  R$  is called  \emph{linear} (or  \emph{affine)}, if it is of the form
$$f(x_1, \ldots, x_n)=\lambda_1 x_1+\ldots+\lambda_n x_n+a,$$
where  $\lambda_i\in \Lambda$ and $a\in R$.  For a non-empty set $X\subseteq  R^n$, we denote by $L(X)$ the set of restrictions to $X$ of linear functions and by $L_\infty(X)$ the set $L(X)\cup\{\pm\infty\},$ where we regard $-\infty$ and $+\infty$ as constant functions on $X$. Obviously, if $f\in L(X)$ then it extends uniquely to a linear map on $cl(X)$, and hence we write $f(a)$ for its value at $a$, even if $a\not\in X$. We also write $f_{| Y}$ for its restriction to a set $Y$, and $\Gamma(f)_Y$ for the graph of $f_{| Y}$.
If $f, g\in L_\infty(X)$ with $f(x)<g(x)$ for all $x\in X$, we write $f<g$ and  denote
$$(f,g)_X=\{(x, y)\in X\times R: f(x)<y<g(x)\}.$$ The notations $[f, g)_X$, $(f, g]_X$ and $[f, g]_X$ obtain the obvious meanings. By \cite[Lemma 2.8]{eep2}, if $C=(f, g)_X$, then $\pi(cl(C))=cl(\pi(C))$. We use this fact repeatedly.

A \emph{linear cell} in $R^n$ is defined similarly to \cite[Chapter 3, (2.3)]{vdd-book}, recursively, as follows:

\begin{itemize}
 \item
 $C\sub R$ is a linear cell if it is a singleton, or an open interval with endpoints in $R\cup\{\pm\infty\}$.
 \item
 $C\sub R^n$, $n>1$, is a linear cell if it is a set of the form $\Gamma(f)$, for some $f\in L(X)$, or $(f,g)_X$, for some $f,g\in L_\infty(X)$, and $X$ is a linear cell in $R^n$.
\end{itemize}
We call $\pi(C)$ the \emph{domain}, and $f, g$ the \emph{cell-maps}, of $C$. We attach an index $(i_1, \dots, i_n)\in \{0, 1\}^n$ to each linear cell $C$, such that $i_m=0$ if and only if  $\pi_m(C)$ is the graph of a function, for $1\le m\le n$.

We refer the reader to \cite[Chapter 3, (2.10)]{vdd-book} for the definition of a \emph{decomposition} of $R^n$. A \emph{linear decomposition of $R^n$} is then a decomposition $\mathcal{C}$ of $R^n$ such that each $B\in \mathcal{C}$ is a linear cell. The \emph{linear cell decomposition theorem}  can be proved similarly to \cite[Chapter 3, (2.11)]{vdd-book} and has already been observed in \cite[Section 3]{elst}:\\

\noindent\textbf{Linear cell decomposition theorem.}
\begin{enumerate}
\item
{\em Given any definable sets $A_1, \dots, A_k\subseteq  R^n$, there is a linear decomposition $\mathcal{C}$ of $R^n$ that partitions each $A_i$.}

\item
{\em Given a definable function $f:A\to  R$, there is a linear decomposition $\mathcal{C}$ of $R^n$ that partitions $A$ such that the restriction $f_{\res B}$ to each $B\in\mathcal{C}$ with $B\subseteq  A$ is linear.}\smallskip
\end{enumerate}

The notion of a `special linear decomposition' was introduced in \cite{eep2} and we recall it here, in a slightly different version. First, let us  define a \emph{linear decomposition of a definable set $Y$} as the restriction to $Y$ of  a linear decomposition of $R^n$ that partitions $Y$. (Equivalently, one could follow \cite[Chapter 4 (2.5)]{vdd-book}, where simply the properties of a decomposition of $R^n$ are required for a partition of $Y$.)

\begin{defn}\label{def-special} A \emph{special linear decomposition} of a definable set $Y\sub R^n$ is defined recursively on $n$, as follows. Any linear decomposition of $Y\sub R$ is special. A linear decomposition $\cal C$ of $Y$, $n>1$, is special if:
\begin{enumerate}
  \item $\pi(\cal C)$ is a special linear decomposition of $\pi(Y)$.

\item  For every two cells $\Gamma(f)_S$ and $\Gamma(g)_T$ in $\cal C$ with $S\sub cl(T)$,
$$f_{|S}<g_{|S} \text{ or } f_{|S}=g_{|S} \text{ or } f_{|S}>g_{|S}.$$

\item For every two cells  $(f,g)_T$ and $X$ in \cal C, where $X=\Gamma (h)_S, (h,k)_S$ or $(k, h)_S$,
$$ \text{ there is no $c\in cl(S)\cap cl(T)$ such that $f(c)<h(c)<g(c)$.}$$
\end{enumerate}
\end{defn}

The above definition differs from \cite[Definition 2.5]{eep2} in that (a) it is given for any set $Y$ and not just $R^n$, and (b) it further requires property (2). Observe that (2) for $Y=R^n$ was proved in \cite[Lemma 2.12]{eep2}, but that does not guarantee it for any $Y\sub R^n$. Having it for any $Y$ will be handy when stating Lemma \ref{formain2} below.  Meanwhile, none of (a) or (b) above causes serious diverging from \cite{eep2}, since  the proof of \cite[Lemma 2.6]{eep2} actually shows:

\begin{fact}\label{fact-special1} Let $Y\sub R^n$ be a definable set. Then for any linear decomposition $\mathcal{D}$ of $Y$, there is a special linear decomposition $\mathcal{C}$ of $Y$ that refines $\mathcal{D}$ (that is, every cell in $\mathcal{D}$ is a union of  cells in $\mathcal{C}$).
\end{fact}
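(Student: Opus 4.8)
\textbf{Proof proposal for Fact \ref{fact-special1}.}

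The plan is to mimic the proof of \cite[Lemma 2.6]{eep2}, proceeding by induction on $n$, and at each step taking common refinements to force the three defining properties of Definition \ref{def-special} to hold. First I would treat the base case $n=1$: by definition every linear decomposition of $R$ is special, so any common refinement of $\mathcal{D}$ (e.g. $\mathcal{D}$ itself) works, and all we need here is that a common refinement of finitely many linear decompositions of $R$ is again a linear decomposition of $R$, which is immediate since linear cells of $R$ are just points and open intervals and intersections and complements of these are again finite unions of points and open intervals. For the inductive step, let $\mathcal{D}$ be a linear decomposition of $Y\sub R^n$ with $n>1$. Apply $\pi$ to get the linear decomposition $\pi(\mathcal{D})$ of $\pi(Y)\sub R^{n-1}$, and by the inductive hypothesis fix a special linear decomposition $\mathcal{E}$ of $\pi(Y)$ refining $\pi(\mathcal{D})$. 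Pull $\mathcal{E}$ back: replace each cell $C\in\mathcal{D}$ with the cells $C\cap\pi^{-1}(V)$ for $V\in\mathcal{E}$; this is still a linear decomposition of $Y$ (one checks $\pi^{-1}(V)\cap\Gamma(f)_S=\Gamma(f)_{S\cap V}$ and $\pi^{-1}(V)\cap(f,g)_T=(f,g)_{T\cap V}$ are linear cells, using that $\mathcal{E}$ refines $\pi(\mathcal{D})$ so $S\cap V,T\cap V$ lie in $\mathcal{E}$), it refines $\mathcal{D}$, and its projection is exactly $\mathcal{E}$, which is special — so property (1) of Definition \ref{def-special} holds. Call this new decomposition $\mathcal{D}'$.

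Next I would arrange property (2). For this, collect all cell-maps $f$ appearing as the linear function defining a graph cell $\Gamma(f)_S\in\mathcal{D}'$; there are finitely many. For each pair $f,g$ of them, the set where $f<g$, where $f=g$, and where $f>g$ are three semi-linear subsets of $R^{n-1}$ (defined by the affine inequality $f-g\lessgtr 0$), and in particular unions of linear cells. Using the linear cell decomposition theorem applied to $\pi(Y)$ together with all these semi-linear sets, and then passing through the inductive hypothesis once more to make the result special, refine $\mathcal{E}$ to a special linear decomposition $\mathcal{E}'$ of $\pi(Y)$ such that each of the sets $\{f<g\}$, $\{f=g\}$, $\{f>g\}$ is a union of cells of $\mathcal{E}'$, for every relevant pair $f,g$; then pull $\mathcal{E}'$ back to $Y$ as before to get $\mathcal{D}''$. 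By construction, on each $V\in\pi(\mathcal{D}'')=\mathcal{E}'$ and each pair of graph-cell-maps $f,g$, exactly one of $f_{|V}<g_{|V}$, $f_{|V}=g_{|V}$, $f_{|V}>g_{|V}$ holds — this is property (2). One has to be a little careful that pulling back a special decomposition of the base preserves property (1); but that is immediate since the projection of $\mathcal{D}''$ is again $\mathcal{E}'$, which was chosen special.

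Finally I would arrange property (3), which is exactly the content handled in the original \cite[Lemma 2.6]{eep2} proof, so I would invoke that argument essentially verbatim: for each pair of a graph cell $\Gamma(h)_S$ and a band cell $(f,g)_T$ in $\mathcal{D}''$, the ``bad'' locus $\{c : f(c)<h(c)<g(c)\}$ is again a semi-linear subset of $R^{n-1}$, and by further refining the base decomposition so that its closure-behaviour separates this locus from $cl(S)\cap cl(T)$ (splitting $S$ and $T$ accordingly, using \cite[Lemma 2.8]{eep2} to control projections of closures) one removes all violations; since each refinement step only splits cells, properties (1) and (2) are preserved, and the process terminates as there are finitely many cells and cell-maps. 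The resulting decomposition is the desired special $\mathcal{C}$ refining $\mathcal{D}$. I expect the main obstacle to be bookkeeping: verifying that each successive base-refinement-and-pullback genuinely preserves the previously established properties (1) and (2) while establishing the next, and checking that pullbacks of linear cells along $\pi^{-1}$ of cells of a refined base decomposition are again linear cells — these are routine but need the compatibility ``$\mathcal{E}'$ refines $\pi(\mathcal{D}'')$'' maintained throughout, exactly as in \cite{eep2}.
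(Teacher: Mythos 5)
Your base case, your pullback of the inductively-obtained special decomposition of $\pi(Y)$, and your arrangement of property (2) of Definition \ref{def-special} (refine the base so that each of $\{f<g\}$, $\{f=g\}$, $\{f>g\}$ is a union of base cells) are all fine and consistent with what the paper does. The gap is in your treatment of property (3). Property (3) is a condition at common \emph{closure} points of the domains $S$ and $T$, and it cannot be enforced by refining the base decomposition alone, which is all you propose ("further refining the base decomposition\dots splitting $S$ and $T$ accordingly"). Concretely, in $R^2$ take $\Gamma(h)_S$ with $S=(0,1)$, $h\equiv 1$, and $(f,g)_T$ with $T=(1,2)$, $f\equiv 0$, $g\equiv 2$. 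Then $c=1\in cl(S)\cap cl(T)$ and $f(c)<h(c)<g(c)$. However you partition $(0,1)$ and $(1,2)$, the point $1$ remains in the closure of some piece of $S$ and of some piece of $T$, so the violation persists. The only way to kill it is to refine in the \emph{fiber} direction: replace $(f,g)_T$ by $(f,h)_T$, $\Gamma(h)_T$, $(h,g)_T$, i.e.\ insert the graph of the offending $h$ over $T$.

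This is exactly what the paper's proof does, and it does it wholesale: it fixes one finite family $\mathcal{F}$ of globally defined linear maps containing (extensions of) all cell-maps of $\mathcal{D}$, decomposes $R^{n-1}$ so that it partitions each $\pi\bigl(\Gamma(f)\cap\Gamma(g)\bigr)$ for $f,g\in\mathcal{F}$ (whence, by definable connectedness of base cells, any two members of $\mathcal{F}$ are comparable on every base cell --- this also yields your property (2) for free), and then stacks \emph{all} of $\mathcal{F}$ over \emph{every} base cell $V$. Consequently every band $(f,g)_T$ in the resulting decomposition lies between consecutive members of $\mathcal{F}$ over $T$, so every $h\in\mathcal{F}$ satisfies $h\le f$ or $h\ge g$ on $T$, hence on $cl(T)$ by continuity, and (3) follows. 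Note also that once you start inserting graphs $\Gamma(h)_T$ into bands to repair (3), you need $h$ to be comparable with $f$ and $g$ on $T$ (otherwise the inserted pieces are not cells over $T$), which forces you back to the global comparability claim; so the paper's "stack everything everywhere" construction is not just a convenience but essentially the mechanism your sketch is missing.
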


\noindent We include the proof of the above fact in the Appendix, for completeness.
Let us recall here another important corollary from \cite{eep2}, which we will use in the proof of Proposition \ref{main1}.

\begin{fact}\label{fact-special2}
Let $Y\sub R^n$ be a definable set,  $\mathcal{C}$ a special linear decomposition of $Y$, and $D, E\in \mathcal{C}$ such that $D\cap cl(E)\ne\emptyset$. Then $D\subseteq  cl(E)$.
\end{fact}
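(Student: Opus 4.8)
The plan is to argue by induction on $n$, projecting to $R^{n-1}$ and lifting the conclusion one coordinate at a time. I would freely use the explicit description of closures of linear cells, namely $cl(\Gamma(h)_X)=\Gamma(h)_{cl(X)}$ and $cl((f,g)_X)=\{(x,y):x\in cl(X),\ f(x)\le y\le g(x)\}$ (with the evident reading when a cell-map equals $\pm\infty$), which follows from continuity of linear functions and \cite[Lemma 2.8]{eep2}. The base case $n=1$ is immediate: a linear decomposition of $Y\sub R$ partitions it into points and open intervals, so if $D\cap cl(E)\ne\emptyset$ with $D\ne E$ then $D\cap E=\emptyset$, which forces $D$ to be a point contained in $cl(E)\sm E$, hence $D\sub cl(E)$.

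For $n>1$, put $\pi=\pi_{n-1}$, $D'=\pi(D)$, $E'=\pi(E)$; by property (1), $\pi(\cal C)$ is a special linear decomposition of $\pi(Y)$ with $D',E'$ among its cells. Taking $p\in D\cap cl(E)$, continuity of $\pi$ and the closure formulas give $\pi(p)\in D'\cap cl(E')$, so the inductive hypothesis yields $D'\sub cl(E')$; it remains to lift this. If $D=\Gamma(h)_{D'}$ and $E=\Gamma(k)_{E'}$, then $p\in cl(E)$ forces $h(\pi(p))=k(\pi(p))$ with $\pi(p)\in D'$, so property (2) (with $V=D'$) gives $h_{|D'}=k_{|D'}$ and hence $D=\Gamma(k)_{D'}\sub\Gamma(k)_{cl(E')}=cl(E)$. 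If $D=(f_D,g_D)_{D'}$ is a band and $E=\Gamma(k)_{E'}$ a graph, then $p\in D\cap cl(E)$ gives $f_D(\pi(p))<k(\pi(p))<g_D(\pi(p))$ with $\pi(p)\in cl(D')\cap cl(E')$, contradicting property (3); so this configuration does not occur.

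The remaining case, $E=(f_E,g_E)_{E'}$ a band, is the heart of the matter, and I would treat it using properties (2) and (3) together with the fact that the graph over $E'$ of a finite cell-map of $E$ is again a cell of the decomposition (over a fixed domain, cells stack up as graphs and bands alternately). If $D=\Gamma(h)_{D'}$, property (3) prevents $D$ from meeting the interior of the column of $cl(E)$ over $cl(E')$, so $p$ lies on a bounding graph of $cl(E)$; say $h(\pi(p))=f_E(\pi(p))$. Comparing $\Gamma(h)_{D'}$ with the graph cell $\Gamma(f_E)_{E'}$ via property (2) (they agree at $\pi(p)$) yields $h_{|D'}=(f_E)_{|D'}$, and since $f_E\le g_E$ on $cl(E')\supseteq D'$ this gives $D=\Gamma(f_E)_{D'}\sub cl(E)$. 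If $D=(f_D,g_D)_{D'}$ is a band, one shows instead that $f_E\le f_D$ and $g_D\le g_E$ hold on all of $D'$: if, say, $f_D(x_0)<f_E(x_0)$ for some $x_0\in D'$, then (using that $f_E(\pi(p))<g_D(\pi(p))$) a sign-change argument on the convex set $D'$ produces a point at which a bounding graph of $D$ meets $\Gamma(f_E)_{E'}$, and one gets a contradiction from property (3) when that point is interior to $D$, or from property (2) when it lies on $\Gamma(g_D)_{D'}$.

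The step I expect to be the main obstacle is exactly this last case. Neither property (2) nor property (3) compares two band cells directly, so one is forced to descend from a band to its bounding graph cells — and hence to arrange that those graph cells are actually present. This is handled by observing that the conclusion $D\sub cl(E)$ concerns $D$ and $E$ only as subsets of $R^n$, so one may assume at the outset that $\cal C$ is the restriction of a special linear decomposition of $R^n$, in which every graph of a finite cell-map is a cell. The rest of the argument is a routine unwinding of the definitions and of the cell-closure formulas.
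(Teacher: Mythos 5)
Your induction, the base case, and the first two cases of the inductive step are the standard argument and are fine: two graph cells are compared via Definition \ref{def-special}(2), and a band $D$ meeting $cl(E)$ for a graph $E$ is excluded by Definition \ref{def-special}(3), both applied to cells that really are in $\mathcal{C}$. The gap is the closing reduction, on which the remaining two cases entirely depend. To run property (2) against $\Gamma(f_E)_{E'}$, or to compare two bands through their bounding graphs, you need those graphs to be cells of $\mathcal{C}$, and you obtain them by declaring that one may assume $\mathcal{C}$ is the restriction of a special linear decomposition of $R^n$. That is exactly the converse direction of Remark \ref{rem1}, which the paper states without proof and explicitly says it will not use. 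Your justification --- that the conclusion concerns $D$ and $E$ only as subsets of $R^n$ --- does not deliver it: the hypothesis of the Fact is a property of the decomposition of $Y$, and a special linear decomposition of $Y$ in the literal sense of Definition \ref{def-special} need not extend, with $D$ and $E$ kept intact, to a special one of $R^n$.

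The gap is not a formality, because when $Y$ is a proper subset the bounding graphs can simply be absent and conditions (2)--(3) lose their force. Take $n=2$, $D=\{0\}\times(-1,1/2)$, $E=(0,1)\times(0,1)$, $Y=D\cup E$ and $\mathcal{C}=\{D,E\}$. This is the restriction to $Y$ of an evident linear decomposition of $R^2$; $\pi(\mathcal{C})=\{\{0\},(0,1)\}$ is a linear (hence special) decomposition of $[0,1)$; and (2), (3) hold vacuously since $\mathcal{C}$ contains no graph cell. So $\mathcal{C}$ satisfies the letter of Definition \ref{def-special}, yet $D\cap cl(E)=\{0\}\times[0,1/2)\ne\emptyset$ while $D\not\sub cl(E)$. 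The same example shows your reduction is unavailable: any special linear decomposition of $R^2$ containing $E$ as a cell also contains $\Gamma(0)_{(0,1)}$, and Definition \ref{def-special}(3) applied to $\Gamma(0)_{(0,1)}$ and $D$ fails at $c=0$, so $\{D,E\}$ admits no such extension. The statement is therefore only available under the reading that $\mathcal{C}$ arises by restricting a special linear decomposition of $R^n$ --- which is how the paper actually produces its decompositions and is the setting of the proof it imports from \cite{eep2} --- and your argument must either adopt that as a hypothesis or verify the extension property for the decompositions that actually occur; it cannot wave the reduction through. (Separately, even granting the reduction, your band--band ``sign-change'' step is only a sketch and should split the cases $D'=E'$, where $D$ is disjoint from the bounding graphs of $E$ as cells of the ambient decomposition, and $D'\sub cl(E')\sm E'$.)
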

\begin{proof}
The proof in \cite[Corollary 2.15]{eep2}  uses
\cite[Lemma 2.14]{eep2}. Both references hold with $Y$ in place of $R^n$, with identical proofs, after replacing $R^n$ by $Y$, and \cite[Lemma 2.12]{eep2} by  Definition \ref{def-special}(2).
\end{proof}

\begin{remark}\label{rem1} If $\cal C'$ is a special linear decomposition of $R^n$ that partitions a definable set $X$, then its restriction $\cal C$ to $X$ is clearly a special linear decomposition of $X$. In fact, every special linear decomposition \cal C of $X$ can be obtained in this way, but we do not prove or make use of this fact here.
\end{remark}

\subsection{Canonical linear cells, faces and half-cells} The canonical retraction in Section \ref{sec-cancontr} is a deformation retraction of a \emph{canonical} linear cell to the closure of the \emph{half-cell} of one of its \emph{faces}. In this section we introduce these three notions.


\begin{defn}
Let $D\sub R^n$ be a linear cell. For every $i=1, \dots, n$, let $h_i$ be either
\begin{itemize}
  \item the unique linear map $h_i:\pi_{i-1}(D)\to R$ with $\pi_i(D)=\Gamma(h_i)$, or
  \item the unique pair of linear maps $h_i=(f_i, g_i)$ with $\pi_i(D)=(f_i, g_i)_{\pi_{i-1}(D)}$.
\end{itemize}
We call $h_1, \dots, h_n$ the \emph{defining maps of $D$}.
We call $D$ a \emph{canonical} linear cell if it is bounded, and for every $i$, $h_i=0$ or $h_i=(0, g_i)$.
\end{defn}

Note: since $R^0=\{0\}$, a canonical linear cell in $R$ is either $\{0\}$ or an interval $(0, a)$, $a\in R$. Also, for every canonical linear cell $D$, $0\in cl(D)$.

\begin{defn}[Faces]
Let $C, D\sub R^n$ be two canonical linear cells. We define, recursively on $n$, that $C$ is a \emph{face} of $D$, if $C$ is a $(j_1, \dots, j_n)$-cell and $D$ is a $(i_1, \dots, i_n)$-cell, such that
\begin{enumerate}
\item $(j_1, \dots, j_n)\le (i_1, \dots, i_n)$,
  \item $\pi(C)$ is a face of $\pi(D)$, and
  \item
 \begin{itemize}
  \item $C=\Gamma(0)_{\pi(C)}$, if $D=\Gamma(0)_{\pi(D)}$,
  \item $C=\Gamma(0)_{\pi(C)}$, if $D=(0, g)_{\pi(D)}$ and $j_n=0$,
  \item $C=(0, g)_{\pi(C)}$, if $D=(0, g)_{\pi(D)}$ and $j_n=1$.
\end{itemize}
\end{enumerate}
\end{defn}


We make a few easy observations. Assume $C$ is a face of $D$ as in the above definition, and let $\sigma=(j_1, \dots, j_n)$. We have that $C\sub cl(D)$. If $\sigma <(i_1, \dots, i_n)$, then $C$ is contained in the boundary of $D$. If  $\sigma=(0, \dots, 0)$, then $C=\{0\}$. If $\sigma=(i_1, \dots, i_n)$, then $C=D$. By induction on $n-m$, one can see that for all $m=1, \dots, n$, $\pi_m(C)\sub cl(\pi_m(D))$ is a face of $\pi_m(D)$. By induction on $n$, if $E$ is a face of $C$ and $C$ is a face of $D$, then $E$ is a face of $D$. 

 The following claim will be used later on.

\begin{claim}\label{bdry}
Let $\cal C$ be a special linear decomposition of a definable set, and $C, D\in \cal C$ two canonical linear cells with $C\sub cl(D)$. Then $C$ is a face of $D$.
\end{claim}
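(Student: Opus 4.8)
The plan is to argue by induction on $n$, exploiting the recursive structure of both special linear decompositions and canonical cells, together with Fact \ref{fact-special2} which guarantees that $C\cap cl(D)\neq\emptyset$ already forces $C\subseteq cl(D)$. The base case $n=1$ is immediate: in $R$ a canonical cell is a point $\{0\}$ or an interval $(0,a)$, and $0\in cl(C)\subseteq cl(D)$ forces $C$ to be either $\{0\}$ or $(0,a')$ with $a'\le a$; but since $C,D$ belong to the same decomposition $\pi(\cal C)$ of $R$, the only possibilities are $C=\{0\}$ (the $0$-face of $D$) or $C=D$ (the $1$-face). So the real content is the inductive step.

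For the inductive step, write $A=\pi(D)$, $B=\pi(C)$, and let $h_n$ (resp.\ $h_n'$) be the last defining map of $D$ (resp.\ $C$). First I would check the hypotheses descend to the projection: $\pi(\cal C)$ is a special linear decomposition (Definition \ref{def-special}(1)), $A$ and $B$ are canonical linear cells (projections of canonical cells are canonical), and $0\in cl(B)\subseteq cl(A)$ — the last inclusion because $cl(B)=\pi(cl(C))\subseteq \pi(cl(D))=cl(A)$ using \cite[Lemma 2.8]{eep2} as recalled in the preliminaries. Also $0\in cl(B)$ since the closure of a canonical cell contains the origin, or rather since $0\in cl(C)$ projects into $cl(B)$. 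By the inductive hypothesis, $B$ is a face of $A$, say the $(j_1,\dots,j_{n-1})$-face. It remains to identify $h_n'$ in terms of $h_n$ and to see that the index $j_n$ makes $C$ the corresponding $\sigma$-face.

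The heart of the argument is the analysis of $h_n'$ versus $h_n$. Since $D$ is canonical, $h_n=0$ or $h_n=(0,g)$ for some positive linear $g$ on $A$. I would split into cases. If $h_n=0$, then $cl(D)=cl(\Gamma(0)_A)=\Gamma(0)_{cl(A)}$, so $C\subseteq cl(D)$ forces $C\subseteq \Gamma(0)_{cl(A)}$, hence $h_n'=0$ and $C=\Gamma(0)_B$, which is exactly the face prescribed in the first bullet of Definition (Faces). If $h_n=(0,g)$, then $C\subseteq cl(D)=[0,g]_{cl(A)}$; since $C$ is itself a canonical cell over $B$, its last defining map $h_n'$ is either $0$ or $(0,g')$ with $g'$ a positive linear map on $B$. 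In the first subcase $C=\Gamma(0)_B$, the $(\dots,0)$-face. In the second subcase $C=(0,g')_B$ with $0<g'(b)\le g(b)$ for all $b\in B$; I must upgrade $g'\le g$ to $g'=g$ on $B$. This is where special-ness of $\cal C$ does the work: $\Gamma(g')_B$ is (up to the relevant cell over $B$) a cell of $\cal C$ sitting in $cl((0,g)_A)$, and Definition \ref{def-special}(3) — no graph-cell threading strictly between the cell-maps of an interval-cell over overlapping closures — together with the trichotomy in (2) forces $g'$ and $g$ to agree on $B$ (they cannot be strictly interleaved, and $g'<g$ at one point combined with the decomposition structure would produce a cell of $\cal C$ strictly between $\Gamma(g')$ and $\Gamma(g)$ over part of $cl(B)$, contradicting (3), or $g'$ would have to equal $0$ somewhere). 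Hence $g'=g$ on $B$ and $C=(0,g)_B$, the $(\dots,1)$-face.

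\textbf{Main obstacle.} I expect the delicate point to be precisely this last upgrade $g'\le g \Rightarrow g'=g$ on $B$: one has to argue carefully that $C$ being a cell of the \emph{decomposition} $\cal C$ (not merely a subset of $cl(D)$) rules out $g'$ being a proper ``inner'' graph, invoking both parts (2) and (3) of the definition of special and the fact that $\pi(\cal C)$-cells tile $cl(B)$; there may be a boundary subtlety where $g'(b)=g(b)$ on part of $B$ and $g'(b)<g(b)$ elsewhere, which should be handled by restricting to the $\pi(\cal C)$-cell $B$ where, by (2), the trichotomy holds globally. Everything else — descent to projections, the bookkeeping of indices $(j_1,\dots,j_n)$ matching the three bullets of the face definition — is routine induction.
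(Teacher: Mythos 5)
Your proposal is correct and follows essentially the same route as the paper: induction on $n$, projecting to apply the inductive hypothesis to $\pi(C)\subseteq cl(\pi(D))$, and then using the special-ness of $\cal C$ to force the last cell-map of $C$ to agree with that of $D$ on $B$. The paper's treatment of your ``main obstacle'' is more direct than your sketch suggests it needs to be: since both cells are canonical the lower maps are automatically $0$, and the single remaining inequality $k\le g_{|B}$ is shown to be an equality by one application of Definition \ref{def-special}(3), with no need for the auxiliary interleaved-cell argument you outline.
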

\begin{proof} Let $C\sub R^n$ be a $(j_1, \dots, j_n)$-cell and $D\sub R^n$ a $(i_1, \dots, i_n)$-cell. We prove the claim by induction on $n$. For $n=1$, it is immediate. For $n>1$, we have $\pi(C)\sub cl(\pi(D))$ and thus, by induction, $\pi(C)$ is a face of $\pi(D)$. If $j_n=0$, then it is immediate that $C$ is a face of $D$, so assume $i_n=j_n=1$. Let $D=(0, g)_{\pi(D)}$ and $C=(0, k)_{\pi(C)}$, with $\pi(C)\sub cl(\pi(D))$. We want to prove that $k=g_{| \pi(C)}$. Since $cl(C)\sub cl(D)$, we know that for every $t\in \pi(C)$,
$$0<k(t)\le g(t),$$
and by Definition \ref{def-special}(3), the last inequality cannot be strict.
\end{proof}

We now proceed to the notion of a half-cell.

\begin{defn}
Let $A\sub R^{n-1}$ and  $g\in L(A)$ with $g>0$. The \emph{half-map} of $g$ is the map $F\in L(A)$ given by
$$F(x)=\frac{g(x)}{2}.$$
It is clear that $0<F_{|A}<g_{|A}$ and $0\le F_{|cl(A)}\le g_{|cl(A)}$. \smallskip

\noindent Now let $C$ be a canonical linear cell. We define the \emph{half-cell} of $C$, denoted simply by $C'$, recursively, as follows:
\begin{enumerate}
  \item $n=1$. If $C$ is a singleton, then $C'=C$. If $C=(0, a)$, then $C'=(0, \frac{a}{2}]$.
  \item $n>1$. Let $A=\pi(C)$ and $A'$ its half-cell.
  \begin{itemize}
  \item If $C=\Gamma(0)_A$, then $C'=\Gamma(0)_{A'}$.
  \item If $C=(0, g)_A$, then $C'=(0, F]_{A'}$, where $F$ is the half-map of $g$.
  \end{itemize}
\end{enumerate}
\end{defn}

By construction, the half-cell of $\pi(C)$ equals $\pi(C')$.

\begin{lemma}\label{ff'}
Let $C=(0, g)_B$ and $D=(0, h)_{A}$ be two canonical linear cells, such that $C$ is a face of $D$  (and so $B$ is a face of $A$). Let $f\in L(B)$ be the half-map of $g$ and $e\in L(A)$  the half-map of $h$. Then:
$$f = e_{| B}.$$
\end{lemma}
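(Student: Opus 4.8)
\textbf{Proof proposal for Lemma \ref{ff'}.}

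The plan is to argue by induction on $n$, using the recursive definitions of ``canonical linear cell'', ``face'', ``half-cell'' and ``half-map'' in tandem. The statement to prove is that the half-map $f$ of $g$ (where $C=(0,g)_B$) and the half-map $e$ of $h$ (where $D=(0,h)_A$) satisfy $f = e_{|B}$. Since $C$ is a face of $D$, from the definition of a face applied with $i_n = j_n = 1$ (the case $(0,g)_A \leadsto (0,g)_B$) we already get that $g$ is the restriction of $h$ to $B$, i.e. $g = h_{|B}$; moreover $B$ is a face of $A$, and in particular $B \subseteq cl(A)$, so restriction of linear maps from $A$ to $B$ is well-defined. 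The base case $n=1$ is immediate: then $B = A = R^0 = \{0\}$, $g$ and $h$ are constants $a$ and $b$ with $a = b$, so $f = a/2 = b/2 = e$.

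For the inductive step, I would unwind the definition of the half-map. By definition, $f \in L(B)$ is the map $x \mapsto g(x)/2$ and $e \in L(A)$ is the map $x \mapsto h(x)/2$. Since $g = h_{|B}$, for every $x \in B$ we have $f(x) = g(x)/2 = h(x)/2 = e(x)$, so $f = e_{|B}$. Thus the statement is in fact a one-line consequence of the observation $g = h_{|B}$ together with the fact that the half-map operation is just ``divide by $2$'', which commutes with restriction. The role of the induction (and of the hypothesis that $C$ is a face of $D$, so that $B$ is a face of $A$) is only to guarantee that we are in the branch of the face-definition that produces $g = h_{|B}$ rather than a degenerate branch, and that $B \subseteq cl(A)$ so that all the restrictions in sight are legitimate.

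I do not anticipate a genuine obstacle here: the only thing to be careful about is bookkeeping with the recursive definitions — namely extracting cleanly from ``$C$ is a face of $D$'' that the top cell-maps satisfy $g = h_{|B}$ (the relevant clause is the third bullet in the definition of faces, $C = (0,g)_B$ when $D = (0,g)_A$ and $j_n = 1$, where the cell-map is literally reused), and noting that a linear map on $A$ restricts unambiguously to $B \subseteq cl(A)$ because a member of $L(A)$ extends uniquely to a linear map on all of $R^{n-1}$. Once that is in place, the equality $f = e_{|B}$ is just $g(x)/2 = h(x)/2$ for $x \in B$. I would therefore present the proof compactly: state the induction, note the base case, invoke the face definition to get $g = h_{|B}$, and conclude $f(x) = g(x)/2 = h(x)/2 = e(x)$ for all $x \in B$.
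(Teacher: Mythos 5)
Your proof is correct and matches the paper's (the paper simply says ``Clear from the definition''): the face definition gives $g=h_{|B}$, and the half-map operation $x\mapsto g(x)/2$ commutes with restriction, so $f=e_{|B}$. The inductive scaffolding you set up is harmless but, as you yourself note, not really needed beyond extracting $g=h_{|B}$ from the definition of a face.
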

\begin{proof} Clear from the definition.
\end{proof}

It is also clear that for $C$ and $D$ as above, $C'$ is a face of $D'$, but we will not make use of this fact here.

\subsection{Homotopy} We recall the definable analogues of standard notions from algebraic topology, and prepare the ground for the construction of a canonical retraction in Section \ref{sec-cancontr}.

\begin{defn}\label{def-retraction}
Let $A\sub X$ be two definable sets. We say that \emph{$X$ deformation retracts to $A$} if there is a definable continuous $H:[0, q]\times X\to X$ such that:
\begin{enumerate}
\item $H(0, X)=A$
  \item $\forall t\in [0, q]$, $H(t, -)_{\res A}= 1_A$
  \item $H(q, -)= 1_X$.
\end{enumerate}
We call $H$  a \emph{deformation retraction of $X$ to $A$}. If $A$ above is a singleton $\{c\}$, we  say that $X$ is  \emph{definably contractible (to $c$)}, and that $H$ is a \emph{definable contraction} of $X$ to $c$.
\end{defn}

Note that the above notion of a deformation retraction is often regarded as a `strong' one  in the literature, because of (2). Note also that we have omitted the word `definable' from our terminology, for simplicity.

\begin{defn}
Let $A\sub X\sub X'$ be three definable sets and suppose that $H:[0, q]\times X\to X$ and $H':[0, q']\times X'\to X'$ are  deformation retractions of $X$ and $X'$, respectively, to $A$. We say that \emph{$H'$ extends $H$} if $q\le q'$ and for every $(t, x)\in [0, q]\times X$,
$$H(t, x)=H'(t, x).$$
\end{defn}

By a \emph{definable path}  we simply mean a definable continuous map $\gamma:[0, p]\to R^n$. We call $\gamma$ a \emph{loop} if $\gamma(0)=\gamma(p)$. Given $c\in R^n$, the constant path $\ve_c$ is defined by $\ve_c(x)=c$ and its domain can vary according to  context. A definable set is called \emph{definably connected} if every two points of it are connected with a definable path.

\begin{defn} Let $X$ be a definable set and  $\gamma, \delta:[0, p]\to X$  two definable paths with $\gamma(0)=\delta(0)$ and $\gamma(p)=\delta(p)$. We say that $\gamma$ and $\delta$ are \emph{definably homotopic (in $X$)} if there is a definable continuous $F:[0, q]\times [0, p]\to X$ such that:
\begin{enumerate}
\item $F(0, -)=\gamma$.
  \item $\forall t\in [0, q]$, $F(t, 0)=\gamma(0)$ and $F(t, p)=\gamma(p)$.
  \item $F(q, -)= \delta$.
\end{enumerate}
We call $X$ \emph{definably simply-connected} if it is definably connected and every definable loop in it is definably homotopic to a constant path.
\end{defn}

\begin{claim}\label{contractible1}
 Suppose that $X$ is definably contractible. Then it is definably simply-connected.
\end{claim}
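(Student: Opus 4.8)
The plan is to produce, from a definable contraction $H$ of $X$ to a point $c$, a definable homotopy between any given definable loop and the constant loop $\ve_c$, and then handle the two remaining requirements for simple-connectedness: definable connectedness, and the freedom to use a constant loop at an arbitrary basepoint rather than at $c$.

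First I would recall that $X$ definably contractible gives a definable continuous $H:[0,q]\times X\to X$ with $H(0,X)=\{c\}$, $H(t,c)=c$ for all $t$, and $H(q,-)=1_X$. For definable connectedness: given $x_0,x_1\in X$, the maps $t\mapsto H(t,x_0)$ and $t\mapsto H(t,x_1)$ are definable paths from $c$ to $x_0$ and from $c$ to $x_1$ respectively, so concatenating (one reversed) connects $x_0$ to $x_1$; hence $X$ is definably connected.

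Next, the homotopy of a loop to a constant. Let $\gamma:[0,p]\to X$ be a definable loop, say $\gamma(0)=\gamma(p)=x_0$. Define $F:[0,q]\times[0,p]\to X$ by $F(t,s)=H(t,\gamma(s))$. This is definable and continuous as a composition of definable continuous maps. Then $F(q,-)=\gamma$, $F(0,s)=c$ for all $s$ (the constant path $\ve_c$), and for the endpoint condition $F(t,0)=H(t,x_0)=F(t,p)$ for all $t$; note the common value $H(t,x_0)$ need not be constant in $t$, so $F$ is literally a homotopy from $\ve_c$ to $\gamma$ through loops but not yet a homotopy \emph{fixing a basepoint} in the sense of the definition. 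To fix this, I would prepend the "sweeping in" of the basepoint: let $\beta(t)=H(t,x_0)$, a definable path from $c$ to $x_0$, and build a homotopy in two stages — first deform $\ve_c$ to the loop $\beta\cdot\ve_{x_0}\cdot\beta^{-1}$ (the standard "grow a whisker" homotopy, which is definable because $\beta$ is), then apply $F$ reparametrised so that its base-path traces $\beta$ — concatenating these two definable homotopies and reparametrising the $[0,q]$-interval to a common length yields a single definable continuous $F':[0,q']\times[0,p]\to X$ with $F'(0,-)=\ve_{x_0}$, $F'(q',-)=\gamma$, and $F'(t,0)=F'(t,p)=x_0$ for all $t$. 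Reversing in the first coordinate (replacing $t$ by $q'-t$) puts it in the exact form of the definition, witnessing that $\gamma$ is definably homotopic to the constant path $\ve_{x_0}$.

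The main obstacle is precisely this bookkeeping with the basepoint: the naive $F(t,s)=H(t,\gamma(s))$ moves the basepoint along $\beta$, violating condition (2) of the definition of definably homotopic, so one must absorb the path $\beta$ via an explicit (definable) reparametrisation/concatenation argument rather than by the one-line formula. Everything else is routine: all maps in sight are compositions and concatenations of definable continuous maps on closed boxes, hence definable and continuous, and concatenation of definable homotopies (after rescaling the parameter intervals $[0,q]$ to a common length by an affine substitution, which is available in an ordered vector space) stays definable. Since $\gamma$ was an arbitrary definable loop and $X$ is definably connected, $X$ is definably simply-connected.
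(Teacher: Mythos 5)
Your proof is correct, and its core is the same as the paper's: the map $F(t,s)=H(t,\gamma(s))$ obtained by composing the contraction with the loop, plus the observation that $t\mapsto H(t,x)$ is a definable path from $c$ to $x$, which gives definable connectedness. The difference is that the paper's proof stops there and asserts that $F$ already witnesses that $\gamma$ is definably homotopic to $\ve_c$, whereas you correctly point out that $F(t,0)=H(t,\gamma(0))$ is not constant in $t$ unless the loop happens to be based at $c$, so condition (2) of the definition of definable homotopy is not literally satisfied by $F$ alone. Your extra step --- absorbing the base-trace $\beta(t)=H(t,\gamma(0))$ by the standard whisker/conjugation homotopy, so that $\gamma$ is based-homotopic to $\beta^{-1}\cdot\ve_c\cdot\beta$ and hence to the constant loop at $\gamma(0)$ --- is the standard repair, and it stays definable since everything is assembled from $H$, $\gamma$, concatenations and affine reparametrisations of intervals. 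One small slip in your write-up: the intermediate homotopy you describe, from $\ve_c$ to $\beta\cdot\ve_{x_0}\cdot\beta^{-1}$, fixes the basepoint $c$ rather than $x_0$; what you want is the conjugate the other way round, namely the loop $\beta^{-1}\cdot\ve_c\cdot\beta$ based at $x_0$, which then retracts to $\ve_{x_0}$ by pulling in the whisker. This does not affect the correctness of the overall argument, which is if anything more careful than the one in the paper.
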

\begin{proof}
Given a deformation retraction $H:[0, q]\times X\to X$ of   $X$ to $c$, and a definable path  $\gamma:[0, p]\to X$, the map $F:[0, q]\times [0, p]\to X$ defined by
$$F(t, x)=H(t, \gamma(x))$$
witnesses that $\gamma$ is definably homotopic to the constant path $\ve_c$. Moreover, given any $x\in X$, the map $H(-, x)$ is a definable path from $c$ to $x$, witnessing that $X$ is also definably connected.
\end{proof}

By \cite[Lemma 3.2]{bf}, every bounded cell is definably contractible. We also know the converse. Although not used in this paper, we record it for completeness.

\begin{fact}\label{ex-bounded} An unbounded definable set is not definably contractible.
\end{fact}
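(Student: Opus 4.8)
\textbf{Proof proposal for Fact \ref{ex-bounded}.}

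The plan is to argue by contradiction: suppose $X\sub R^n$ is definably contractible via a definable contraction $H:[0,q]\times X\to X$ to a point $c\in X$, and derive that $X$ must be bounded. The key observation is that the image of a definable continuous map with bounded domain is bounded, so I want to exhibit $X$ (or a cofinal piece of it) as such an image. Fix any point $x\in X$. Then $t\mapsto H(t,x)$ is a definable path from $c$ to $x$ whose image lies in $X$ and is bounded, since $[0,q]$ is bounded and continuous semi-linear images of bounded sets are bounded (an affine map sends a bounded set to a bounded set, and a definable continuous map is piecewise affine on a finite decomposition of its bounded domain, hence has bounded image). This shows every point of $X$ is connected to $c$ by a path staying in a bounded subset of $X$, but by itself this does not bound $X$.

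To get the contradiction I would instead use the map $G:[0,q]\times X\to X$, $G(t,x)=H(t,x)$, more cleverly, by restricting the domain. The honest route: assume toward a contradiction that $X$ is unbounded, and pick a definable curve $\gamma:[0,\infty)\to X$ (or $\gamma:(0,1)\to X$ with $\lim_{s\to 1}|\gamma(s)|=\infty$) witnessing unboundedness — such a curve exists by the curve selection / definable choice available in o-minimal structures, applied to the definable unbounded set $X$. Consider $\Phi:[0,q]\times (0,1)\to X$ given by $\Phi(t,s)=H(t,\gamma(s))$. For each fixed $t$, as $s\to 1$ the path $\Phi(t,-)$ must also be unbounded, because $\Phi(1,s)=\gamma(s)\to\infty$; but for $t=0$ we have $\Phi(0,s)=c$ for all $s$, a bounded curve. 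By o-minimality, the set of $t\in[0,q]$ for which $s\mapsto\Phi(t,s)$ is bounded near $s=1$ is a definable subset of $[0,q]$ containing $0$ but not $1$, so it has an infimum-of-the-complement, i.e. there is a least $t_0$ (or a boundary value) where boundedness fails; at that threshold value, continuity of $H$ forces a contradiction, since $\Phi$ restricted to $[0,q]\times[s_0,1)$ has domain with bounded closure once we also control the $s$-direction, yet its image is unbounded.

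Actually the cleanest formulation avoids curve selection entirely. Since $X$ is definable and $H:[0,q]\times X\to X$ is definable and continuous, the set $X=\Im(H(q,-))$; more to the point, $H(\{q\}\times X)=X$. Now stratify: by the linear (or o-minimal) cell decomposition theorem applied to the definable continuous function $H$, there is a decomposition of $[0,q]\times X$ on each cell of which $H$ is affine. Restricting to the cells meeting $\{q\}\times X$, we see $X$ is a finite union of images of cells of $[0,q]\times X$ under affine maps; an affine map sends a bounded cell to a bounded set, so the only way $X$ can be unbounded is if one of these cells $B$ is unbounded. But $B\sub [0,q]\times X$ projects into $[0,q]$ in the first coordinate, which is bounded, so $B$ unbounded forces the $X$-coordinate projection $\pi_{X}(B)\sub X$ to be unbounded — fine, that is consistent with $X$ unbounded and gives nothing yet. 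The real input must therefore be property (1) of Definition \ref{def-retraction}, $H(0,X)=\{c\}$: I would run the argument with the "homotopy collapses a bounded curve to a point" idea combined with the standard o-minimal fact (Berarducci–Fornasiero, or elementary) that a definable continuous image of a definably connected bounded set is bounded, together with a definable connectedness / "finite cells" argument showing that contractibility propagates boundedness backward along $t$.

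\textbf{Main obstacle.} The crux is making precise the step "an unbounded set cannot be continuously deformed to a point while fixing nothing but staying inside itself." The subtlety is that $X$ itself is the target, so there is no obstruction from the ambient space; the obstruction is entirely internal. I expect the cleanest proof to reference the fact (implicit in the surrounding theory and in \cite{bf}) that definable contractibility implies the set has "bounded definable homotopy type," equivalently that a ray in $X$ would have to be contracted through bounded paths, which is impossible by a limiting/continuity argument at the endpoint of the deformation parameter. Pinning down that limiting argument — ensuring the relevant domain has bounded closure so that uniform continuity applies — is the part I would spend the most care on; everything else is routine o-minimality.
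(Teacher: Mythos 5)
Your proposal does not reach a proof: you circle the right ingredient but never find the device that actually exploits it, and you say so yourself in the final paragraph. The key fact you correctly identify is that $\mathcal R$ has no poles, i.e.\ a definable (piecewise affine) map on a bounded domain has bounded image, citing essentially \cite{ed-str}. The paper's proof exploits this with one short construction that is missing from your writeup: it defines a \emph{single-variable} definable function
$$f : t \longmapsto \sup\{\,|x_1+\dots+x_n| : (x_1,\dots,x_n)\in H(t,X)\,\},$$
observes that its domain lies in the bounded interval $[0,q]$ while its image (for $t$ ranging from $0$, where $H(0,X)=\{c\}$, toward $q$, where $H(q,X)=X$) must contain an unbounded interval, and concludes by no-poles. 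Passing to this scalar ``radius'' function is exactly the step that collapses your unbounded domain $[0,q]\times X$ to a bounded one, and it is the step you keep reaching for (``the relevant domain has bounded closure'') without ever producing.

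Your curve-selection route also has a concrete flaw that is easy to miss because it looks innocuous. You want $\gamma : (0,1)\to X$ definable with $\gamma(s)\to\infty$ as $s\to 1$, so that $\Phi(t,s)=H(t,\gamma(s))$ has bounded domain $[0,q]\times(0,1)$ and unbounded image. But in a pure ordered vector space such a $\gamma$ cannot exist: it would itself be a definable map from a bounded set with unbounded image, which is precisely what no-poles forbids. Curve selection gives you a definable curve on an \emph{unbounded} parameter interval $(a,\infty)$, and there is no definable reparametrization onto $(0,1)$ in this setting (that is the whole point of the lack of field multiplication, which you yourself invoke). So the ``clean'' version of your first idea is circular. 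Your second, cell-decomposition, idea correctly stalls at the observation that $[0,q]\times X$ is itself unbounded; you need some way to convert the problem to one with bounded domain, and that is what the supremum function does.
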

\begin{proof}
Let $X$ be an unbounded definable set.  Suppose $H:[0, q]\times X \to X$ is a definable contraction of $X$ to a point $c\in X$. Consider the map $f:[0, q] \to R$ given by
$$t\mapsto \sup\{|x_1 +\dots + x_n|: (x_1, \dots, x_n)\in H(t, X)\}.$$
Then $f$ is a definable map  whose image contains an unbounded interval, since $X$ is unbounded. But that is a contradiction, because  $\cal R$ has no poles; that is, there are no definable bijections between bounded and unbounded sets (\cite{ed-str}).
\end{proof}

We note here that if $\cal R$ were to expand an ordered field, then unbounded cells could also be shown to be definably contractible. For example, $R$ itself would be definably contractible to $0$ via $H: [0,1]\times R\to R$ with $H(t, x)=tx$.
The lack of multiplication in our setting is of course one of the main particularities.

\begin{lemma}\label{contract}
Let $C$ be a canonical linear cell. Then its half-cell $C'$ is definably contractible.
\end{lemma}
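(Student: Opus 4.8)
The plan is to prove that the half-cell $C'$ of a canonical linear cell $C \subseteq R^n$ is definably contractible by induction on $n$. First I would handle the base case $n=1$: here $C'$ is either a singleton (trivially contractible) or a half-open interval $(0, a/2]$, which is definably contractible to the point $a/2$ via the straight-line homotopy $H(t, x) = (1-t)\cdot\frac{a}{2} + t\cdot x$ on $[0,1]\times (0,a/2]$ — note this stays inside $(0, a/2]$ since both endpoints of the segment lie in that set and the set is an interval, so no multiplication-in-the-field issue arises; we only ever scale by the fixed ring element $\lambda = 1-t$ applied to the \emph{difference}, which is legitimate in an ordered vector space once we rewrite it as $x + (1-t)(\frac{a}{2} - x)$, or more carefully, parametrize linearly so only scalar multiplication by ring elements from $\Lambda$ (here we must be slightly careful, since $1-t$ is not a fixed scalar). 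Actually the cleanest formulation: the map $H:[0,1]\times(0,a/2]\to(0,a/2]$, $H(t,x)$ moving $x$ linearly toward $a/2$, is semi-linear and continuous, and its image at $t=0$ is $\{a/2\}$ — contractibility of a half-open interval to its closed endpoint is elementary semi-linear geometry.

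For the inductive step, write $C = \Gamma(0)_A$ or $C = (0, g)_A$ where $A = \pi(C)$, and let $A'$ be the half-cell of $A$, which by the inductive hypothesis is definably contractible, say via a definable contraction $G:[0,q]\times A' \to A'$ to a point $c_0 \in A'$. In the case $C = \Gamma(0)_A$ we have $C' = \Gamma(0)_{A'}$, which is definably homeomorphic to $A'$ via $\pi$, so it is immediately contractible by transporting $G$. In the case $C = (0, g)_A$, we have $C' = (0, F]_{A'}$ where $F$ is the half-map of $g$. Here I would contract in two stages: first use (a reparametrized copy of) $G$ on the base coordinates while carrying the last coordinate along appropriately — the subtlety is that as the base point $\pi(p)$ moves under $G(t,-)$, the "fibre" $(0, F(\pi(p))]$ changes, so one cannot simply fix the last coordinate. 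The fix is to move the last coordinate proportionally, i.e. if $p = (x, y)$ with $x \in A'$ and $0 < y \le F(x)$, send it to $(G(t,x),\ y \cdot \frac{F(G(t,x))}{F(x)})$ — but this again uses division. To avoid division, instead first contract the fibre direction: define $H_1$ on $[0,q_1]\times C'$ sending $(x,y) \mapsto (x, $ linear path from $F(x)$ to $y)$, retracting $C'$ to $\Gamma(F)_{A'}$; then contract $\Gamma(F)_{A'}$, which is the graph of a semi-linear function over $A'$, hence definably homeomorphic to $A'$, hence contractible by the inductive hypothesis; finally compose the two homotopies (standard concatenation of deformation retractions, rescaling the time intervals) to get a definable contraction of $C'$ to the single point $(c_0, F(c_0))$.

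The main obstacle I anticipate is purely bookkeeping: ensuring all the intermediate maps stay inside $C'$ (in particular that the $y$-coordinate stays in the half-open range $(0, F(x)]$ throughout — the endpoint $F(x)$ is included, which is exactly why we retract \emph{onto} $\Gamma(F)$ rather than onto $\Gamma(0)$, and why the half-cell was defined with the closed upper bound), and correctly concatenating the two deformation retractions into one continuous definable map on a single interval $[0, q]$ by the usual splicing (reparametrize $H_1$ to run on $[0, q_1]$ and the transported $G$ to run on $[q_1, q_1 + q]$, agreeing on the overlap at $t = q_1$). Continuity and semi-linearity of each piece are clear since every formula used is built from scalar multiples (by fixed ring elements) of linear maps and linear interpolation, which in an ordered vector space is expressible without field multiplication. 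No genuinely hard point arises; the lemma is a warm-up consolidating the inductive machinery that the canonical retraction of Section \ref{sec-cancontr} will use in earnest.
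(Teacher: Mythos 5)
Your overall architecture is right — induction on $n$, with the graph case handled by transport along $\pi$ and the case $C = (0,g)_A$ handled by concatenating a fibre contraction to $\Gamma(F)_{A'}$ with a base contraction of $\Gamma(F)_{A'}$ — and this does match the structure of the paper's proof. But there is a genuine gap in how you realize each elementary piece, and you flag the issue yourself only to dismiss it incorrectly.

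The problem is the repeated appeal to ``linear interpolation'' and ``linear path'' as if it were semi-linear. The map $(t,x)\mapsto (1-t)\cdot\frac{a}{2} + t\cdot x$ requires the product $t\cdot x$ with both factors ranging over $R$, which is exactly the kind of multiplication an ordered vector space does \emph{not} have; your closing claim that linear interpolation ``is expressible without field multiplication'' is false. The same defect reappears in your fibre stage, where ``$(x,y)\mapsto(x,\text{ linear path from }F(x)\text{ to }y)$'' would again need a parameter-dependent scalar. The device the paper uses to circumvent this is translation-plus-$\max$: for $n=1$ it takes
\[
H(t,x)=\max\Bigl\{\tfrac{a}{2}-t,\ x\Bigr\},\qquad (t,x)\in[0,\tfrac a2]\times(0,\tfrac a2],
\]
which is semi-linear (only translation by $t$ and a definable $\max$), fixes $\tfrac a2$, equals $\tfrac a2$ at $t=0$, equals $x$ at $t=\tfrac a2$, and stays in $(0,\tfrac a2]$. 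Analogously, in the inductive step with $C=(0,g)_A$, the fibre stage is realized by $y\mapsto\max\{F(x)-(t-q_1),\,y\}$ rather than by interpolation, and only for $t<q_1$ does one move the base via $H_1$ while pinning the last coordinate to $F(H_1(t,x))$. Until you replace each interpolation by such a $\max$-with-translation formula, the homotopies you write down are not definable in $\mathcal R$, so the proof does not go through; with that substitution it becomes the paper's argument.
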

\begin{proof}
By induction. Let $n=1$. If $C$ is a singleton, it is trivial, and if $C=(0, a)$, let $H:[0, \frac{a}{2}]\times C'\to C'$ given by $H(t, x)=\max \{\frac{a}{2}-t, x\}$. Then $H$ is a definable contraction of $C'$ to $\{\frac{a}{2}\}$.
Now let $n>1$. By induction, there is a definable contraction
$$H_1:[0, q_1]\times \pi(C')\to \pi(C')$$ of $\pi(C')$ to some $c\in \pi(C')$. If $C=\Gamma(0)_A$, let  $H:[0, q_1]\times C'\to C'$ with
$$H(t, x, y)=(H_1(t, x), 0).$$
Then $H$ is a definable contraction of $C'$ to $(c, 0)$.  If $C=(0, g)_A$, let $F$ be the half-map of $g$, and
 $$H:[0, q_1+\sup Im F]\times C'\to C'$$
with
 $$H(t, x, y)=\begin{cases}
   (H_1(t, x), F(H_1(t, x))), &\text{ if $t<q_1$},\\
   (x, \max \{F(x)-(t-q_1), y\}), & \text{ if $t\ge q_1$}.
 \end{cases}$$
It is easy to check that $H$ is a definable contraction of $C'$ to  $(c, F(c))$.
\end{proof}

\section{Canonical retractions}\label{sec-cancontr}

We are now ready to present the construction of a canonical retraction. As mentioned in the introduction, we first give it for canonical linear cells (Definition \ref{def-cancontr}) and then for arbitrary bounded linear cells (Definition \ref{def-cangen}); this significantly reduces the notational complexity of the presentation. Definition \ref{def-cancontr} is given recursively on $n$ and runs in parallel with Claim \ref{indeed}, where we verify  all necessary properties at the recursive step. Before stating the (rather lengthy) definition, we illustrate it with an example.

\begin{example}\label{ex1} Let $D=(0, g)_{(0, a)}$ be a canonical linear cell in $R^2$,  $f$  the half-map of $g$, $C$ a face of $D$, and $C'$ the half-cell of $C$. We illustrate Cases (II) and (III) of Definition \ref{def-cancontr} below, for $\pi(C)=\{0\}$. Let $q=a+\sup Im g$, and define the deformation retraction $$H:[0, q]\times cl(D)\to cl(D)$$ of $cl(D)$ to $cl(C')$, as follows.\\

\noindent Case (II): $C=\{0\}$. Then $C'=C$ and:
$$H(t, x, y)=\begin{cases}
   (t, \min \{y, t, f(t)\}), &\text{if $t<x$},\\
   (x, \min \{y, t, t-x+f(x)\}), &\text{if $t\ge x$}.
\end{cases}$$

\noindent Case (III): $C=\{0\}\times (0, g(0))$. Then $C'=\{0\}\times (0, g(0)/2)$ and:
$$H(t, x, y)=\begin{cases}
   (t, \min \{y, f(t)\}), &\text{if $t<x$},\\
   (x, \min \{y, t-x+f(x)\}), &\text{if $t\ge x$}.
\end{cases}$$

\begin{tikzpicture}

\filldraw [black!10] (0,0) -- (1.2,1.2) -- (3,1.2) -- (3,0) -- (0,0);
\filldraw [black!5] (0,0) -- (1.2,1.2) -- (1.2,2.4) -- (0,2) -- (0,0);
\filldraw [black!20] (1.2,1.2) -- (3,1.5) -- (3,3) -- (1.2,2.4) -- (1.2,1.2);
\filldraw [black!30] (1.2,1.2) -- (3,1.2) -- (3,1.5) -- (1.2,1.2);

\draw[thick] (1,-1)  node {Case (II)};

\draw [->] (-.5,0) -- (4,0);
\draw [->] (0,-.5) -- (0,3.5);

\draw (3, 0) -- (3,3);
\draw (0, 2) -- (3,3);

\draw (0,1) -- (3, 1.5);
\draw[ thick] (1.2,1.2) -- (2,1.34);
\draw[ thick] (0,0) -- (1.2, 1.2);

\draw[thick] (0.4,0.4) -- (0.4, 1.5);
\draw[thick] (2,1.34) -- (2, 2.1);
\draw[thick] (0.6,0.6) -- (2, 0.6);
\draw[thick] (1.57,1.26) -- (2.57,1.26);

\draw[thick, fill]  (2,0.6) circle (.5mm)  node[right] {$\scriptstyle z_1$};
\draw[thick, fill]  (0.4,1.5) circle (.5mm) node[above] {$\scriptstyle z_4$};
\draw[thick, fill]  (2,2.1) circle (.5mm)  node[above] {$\scriptstyle z_3$};
\draw[thick, fill]  (2.57,1.26) circle (.5mm)  node[right] {$\scriptstyle{z_2}$};

\draw[thick, fill]  (3,-0.1) node[below] {$a$};;
\draw[thick, fill]  (3,1.6) node[right] {$f$};;
\draw[thick, fill]  (3,3.1) node[right] {$g$};;
\draw[thick, fill]  (0,-0.3) node[left] {$C'=\{0\}$};;
\draw[thick, fill]  (0,0) circle (.4mm);

\begin{scope}[xshift=6cm]

\filldraw [black!10] (0,0) -- (3,0) -- (3,1) -- (0,1);
\filldraw [black!30] (0,1) -- (3,1) -- (3,1.5) -- (0,1);
\filldraw [black!20] (0,1) -- (3,1.5) -- (3,3) -- (0,2) -- (0,1);

\draw[thick] (1,-1)  node {Case (III)};

\draw [->] (-.5,0) -- (4,0);
\draw [->] (0,-.5) -- (0,3.5);

\draw (3, 0) -- (3,3);
\draw (0, 2) -- (3,3);

\draw (0,1) -- (3, 1.5);
\draw[ thick] (0,1) -- (2, 1.34);

\draw[ thick] (0,0.5) -- (2, 0.5);
\draw[ thick] (1.2,1.2) -- (2.5, 1.2);
\draw[ thick] (2,1.34) -- (2, 2.1);

\draw[thick, fill]  (2,0.5) circle (.5mm)  node[right] {$\scriptstyle z_5$};
\draw[thick, fill]  (2,2.1) circle (.5mm)  node[above] {$\scriptstyle z_7$};
\draw[thick, fill]  (2.5,1.2) circle (.5mm)  node[right] {$\scriptstyle z_6$};

\draw[thick, fill]  (3,-0.1) node[below] {$a$};;
\draw[thick, fill]  (3,1.6) node[right] {$f$};;
\draw[thick, fill]  (3,3.1) node[right] {$g$};;
\draw[thick, fill]  (-.1,0.5) node[left] {$C'$};;

\draw [decorate,decoration={brace,amplitude=6pt}] (0,0) -- (0,1);


\draw[ultra thick] (0,0) -- (0,1);

\end{scope}

\end{tikzpicture}

\noindent The above pictures depict the images of $H(-, z_i)$, for various $z_i=(x, y)\in D$. Depending on the location of $z_i$, the map $t\mapsto H(t, z_i)_2$ takes the following values:\vskip.2cm

\begin{tikzpicture}

\draw[thick, fill]  (1,4.5) node[right] {\hspace*{1cm}$z_1 :\,\, t,\,\, y$};

\draw[thick, fill]  (1,4) node[right] {\hspace*{1cm}$z_2: \,\,  t,\,\,  f(t),\,\,  y$};

\draw[thick, fill]  (1,3.5) node[right] {\hspace*{1cm}$z_3: \,\,  t,\,\,  f(t),\,\,  t-x+f(x),\,\, y$};

\draw[thick, fill]  (1,3) node[right] {\hspace*{1cm}$z_4: \,\,  t,\,\,  y$};

\begin{scope}[xshift=7cm]

\draw[thick, fill]  (1,4.5) node[right] {$z_5:  \,\, y$};

\draw[thick, fill]  (1,4) node[right] {$z_6:\,\,  f(t),\,\,  y$};

\draw[thick, fill]  (1,3.5) node[right] {$z_7:\,\,   f(t),\,\,  t-x+f(x),\,\, y$};

\end{scope}
\end{tikzpicture}
\end{example}

Our canonical retractions  have the  extra property of being `controlled' in the following sense.

 \begin{defn} Let $H:[0, q]\times X\to X$  be a deformation retraction of $X$ to $A\sub X$. For every $x\in X$, the \emph{fixing point of $x$ under $H$} is the point $\alpha_x\in [0,q]$ given by
$$\alpha_x=\min\{t\in [0, q]: H(t, x)=x\},$$
which exists by continuity of  $H$. We call $H$ \emph{controlled}, if for every $x\in X$ and  $t\ge \alpha_x$, $H(t, x)=x$.
\end{defn}

Note that there may be more ways to handle Example \ref{ex1}. However,  the suggested retractions can generalize to an arbitrary $n$, as follows.


\begin{defn}[Canonical retraction]\label{def-cancontr}
Let $D\sub R^n$ be a canonical linear $(i_1, \dots, i_n)$-cell,   $C$ a canonical linear  $(j_1, \dots, j_n)$-cell, and assume that $C$ is a face of $D$. Let $C'$ be the half-cell of $C$. The \emph{canonical retraction $H$ of $cl(D)$ to $cl(C')$},
$$H_n : [0, q_n]\times cl(D)\to cl(D),$$
 is a controlled retraction defined recursively on $n$, as follows. Let $h_1, \dots, h_n$ be the defining maps of $D$. So $h_i=0$ or $h_i=(0, g_i)$.\\

\noindent $n=1$.  
First, define
$$q_1=\begin{cases}
  0, & \text{if $i_1=0$},\\
  g_1(0), & \text{if $i_1=1$}.
 \end{cases}.$$
 Now let $y\in cl(D)$. If $i_1=j_1$, then define $H_1(t, y)=\min\left\{y, t+\frac{q_1}{2}\right\}$. If $i_1>j_1$, define
 $$H_1(t, y) =\min \{t, y\}.$$\vskip.2cm


\noindent $n>1$. Let $H_{n-1}:[0, q_{n-1}]\times cl(\pi(D))\to cl(\pi(D))$ be the canonical retraction of $cl(\pi(D))$ to $cl(\pi(C'))$. If $i_n=1$, we let $f: \pi(D)\to R$ be the half-map of $g_n$, and so $C'=(0, f]_{\pi(C')}$.
Recall that for every $x\in cl(\pi(D))$, $\alpha_x$ denotes the fixing point of $x$ under $H_{n-1}$,
 $$\alpha_x=\min\{t\in [0, q_{n-1}]:\, H_{n-1}(t, x)= x\}.$$
Now let
$$q_n=
\begin{cases}
q_{n-1}, &\text{if $i_n=0$},\\
q_{n-1}+ \sup Im g_n, & \text{if $i_n=1$},
\end{cases}
$$
and for every $(x, y)\in cl(D)$, define $H_n(t, x, y)$ by cases, as follows.\\

\noindent (I) If $i_n=j_n=0$, then
$H_n(t, x, y)=(H_{n-1}(t, x), 0)$.\\

\noindent (II) If $i_n>j_n$, then
$$H_n(t, x, y)=\begin{cases}
   (H_{n-1}(t, x), \min \{y, t, f H_{n-1}(t, x)\}), &\text{if $t< \alpha_x$},\\
   (H_{n-1}(t, x), \min \{y, t, t-\alpha_x+f H_{n-1}(t, x)\}), &\text{if $t\ge \alpha_x$}.
\end{cases}$$

\noindent (III) If $i_n=j_n=1$,  then $$H_n(t, x,y)=\begin{cases}
(H_{n-1}(t, x), \min \{y, f H_{n-1}(t, x)\}), &\text{if $ t< \alpha_x$},\\
   (H_{n-1}(t, x), \min \{y,  t-\alpha_x+f H_{n-1}(t, x)\}), &\text{if $ t\ge\alpha_x$}.
\end{cases}$$
\end{defn}

\smallskip
\noindent\textbf{Note:} In Cases (II) and (III), for $t\ge \alpha_x$, $H_{n-1}(t,x)=x$, since $H_{n-1}$ is controlled.\\

We next verify that, at the recursive step,  $H_n$ has the required properties. The proof is rather straightforward, but we include it for completeness.

\begin{claim}\label{indeed}  $H_n$ is a controlled deformation retraction of $cl(D)$ to $cl(C')$.\end{claim}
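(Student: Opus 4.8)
The plan is to verify the three defining conditions of a deformation retraction together with niceness, by induction on $n$, carrying the inductive hypothesis that $H_{n-1}$ is a nice deformation retraction of $cl(\pi(D))$ to $cl(\pi(C'))$. The base case $n=1$ is a direct check: if $i_1=j_1$ then $H_1$ is constant in $t$ and literally the identity, hence trivially a nice retraction of $cl(D)$ to $cl(C')=cl(D)$; if $i_1>j_1$, so $D=(0,g_1(0))$ and $C=\{0\}$, $C'=\{0\}$, the map $H_1(t,y)=\min\{t,y\}$ satisfies $H_1(0,y)=0$, $H_1(q_1,y)=\min\{g_1(0),y\}=y$ for $y\in cl(D)=[0,g_1(0)]$, fixes $0$ for all $t$, and is nice since $\alpha_y=y$ and $\min\{t,y\}=y$ for $t\ge y$.

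For the inductive step I would treat the three cases separately but with a common scheme. In each case, continuity of $H_n$ follows from continuity of $H_{n-1}$, continuity of $f$, the continuity of $\min$, and the fact that the two branches ($t<\alpha_x$ and $t\ge\alpha_x$) agree on the overlap $t=\alpha_x$ — here one uses that $H_{n-1}(\alpha_x,x)=x$ so the term $f H_{n-1}(t,x)$ matches and the extra $t-\alpha_x$ vanishes at $t=\alpha_x$; one must also check $x\mapsto\alpha_x$ interacts well, but since the formula is continuous jointly in $(t,x)$ on each closed piece $\{t\le\alpha_x\}$, $\{t\ge\alpha_x\}$ and these pieces are closed with continuous boundary, gluing gives joint continuity. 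That the image lands in $cl(D)$ requires checking $0\le H_n(t,x,y)_n\le g_n(H_{n-1}(t,x))$ (or $=0$ in Case (I)): the lower bound is clear since all quantities $y,t,f H_{n-1}(t,x),t-\alpha_x+fH_{n-1}(t,x)$ are $\ge 0$ on the relevant domain, and the upper bound uses $y\le g_n(x)$, $f\le g_n$ pointwise (the half-map inequality), and $H_{n-1}(t,x)\in cl(\pi(D))$. Condition (1): at $t=0$ we get $H_{n-1}(0,x)\in cl(\pi(C'))$ and $\alpha_x>0$ (unless $x$ is already fixed), so the first branch applies and the second coordinate is $\min\{y,0,\ldots\}=0$ in Case (II), $\min\{y,fH_{n-1}(0,x)\}$ in Case (III) — combined with $\pi(C')$ in the first coordinate this is exactly $cl(C')$; one checks the boundary behavior when $\alpha_x=0$ separately. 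Condition (3): at $t=q_n$ we have $t\ge\alpha_x$ (since $q_n\ge q_{n-1}\ge\alpha_x$) and $q_n-\alpha_x\ge \sup Im\, g_n\ge g_n(x)\ge y$, and $H_{n-1}(q_n,x)=x$, so the second coordinate becomes $\min\{y,\ldots\}=y$, giving the identity. Condition (2): for $(x,y)\in cl(C')$ one has $x\in cl(\pi(C'))$ so $H_{n-1}(t,x)=x$ for all $t$ (by the inductive retraction property, part (2)) and $y\le f(x)$, and then in Cases (II),(III) $\min\{y,\ldots\}=y$ needs the bound $t\ge y$ resp. $t-\alpha_x+f(x)\ge y$; here $\alpha_x=0$ for $x\in cl(\pi(C'))$, so these reduce to $t\ge y$ and $f(x)\ge y$ respectively, both of which hold (the latter since $(x,y)\in cl(C')$ means $y\le f(x)$). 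Case (I) is immediate throughout.

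Niceness is the remaining point: I must show $\alpha_{(x,y)}$ in the $n$-dimensional map equals something explicit and that $H_n$ is constant in $t$ afterwards. For a point $(x,y)\in cl(D)$ the fixing time is $\max\{\alpha_x,\,\text{(time needed for the second coordinate to reach }y)\}$; by niceness of $H_{n-1}$ the first coordinate is $x$ for $t\ge\alpha_x$, and for such $t$ the second coordinate is monotone nondecreasing in $t$ (it is $\min\{y, t-\alpha_x+f(x)\}$ type, with $t-\alpha_x+f(x)$ increasing), reaching and then staying at $y$; before $\alpha_x$ the analysis is that once it hits the value it stays — this requires checking the $t<\alpha_x$ branch is also eventually constant-at-the-right-value, which follows because $fH_{n-1}(t,x)$ stabilizes. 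So once $H_n(t,x,y)=(x,y)$ it remains so.

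The main obstacle I expect is the gluing/continuity across $t=\alpha_x$ combined with the dependence of $\alpha_x$ on $x$, and more subtly verifying condition (2) (fixing $cl(C')$ pointwise) in Case (II): there the term $\min\{y,t,\ldots\}$ contains a bare $t$, so for very small $t$ and a point $(x,y)\in cl(C')$ with $y>0$ one momentarily has $\min\{y,t,\ldots\}=t\ne y$ — meaning $H_n$ does \emph{not} fix such a point at time $t<y$. I would need to re-examine whether condition (2) is actually being claimed for all of $cl(C')$ or only that $H_n(q_n,-)$ restricted appropriately behaves well; most likely the intended reading is that $cl(C')$ in Case (II) is forced to be just $\{0\}\times\{0\}=\{0\}$ (since $j_n=0$ kills the last coordinate and lower faces collapse), so $cl(C')$ is a point and condition (2) is the trivial statement that $0$ stays fixed — which indeed holds since $\min\{0,t,\ldots\}=0$. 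Pinning down exactly which face $C$ can be in each case, and hence what $cl(C')$ is, is the delicate bookkeeping that makes the proof go through.
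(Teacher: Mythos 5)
Your proof follows essentially the same route as the paper's: induction on $n$, checking that the image lands in $cl(D)$ via the half-map inequality for $t<\alpha_x$ and $y\le g_n(x)$ for $t\ge\alpha_x$, verifying conditions (1)--(3) of the retraction definition case by case (with the same bounds $q_n\ge \sup Im\, g_n\ge y$ and $q_n-\alpha_x+f(x)\ge y$ for condition (3)), and establishing niceness by the monotonicity of $t$, $fH_{n-1}(t,x)$ and $t-\alpha_x+fH_{n-1}(t,x)$ past the fixing time. The only slip is in your last paragraph: in Case (II) the half-cell $C'$ is $\Gamma(0)_{B'}$ where $B'$ is the half-cell of the $(j_1,\dots,j_{n-1})$-face of $\pi(D)$, so $cl(C')$ need not be a single point; what resolves your worry (and what your computation actually uses) is only that every point of $cl(C')$ has last coordinate $0$, so $\min\{0,t,\dots\}=0$, while the first $n-1$ coordinates are fixed by the inductive hypothesis.
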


\begin{proof} We work by induction. For $n=1$, all properties below are immediate and we omit their proofs. Moreover, $\alpha_y$ is continuous in this case. Let $n>1$.  We denote $g=g_n$. \vskip.2cm

\noindent  \textbf{(a) $H_n$ is a map $H_n:[0, q_n] \times  cl(D)\to cl(D)$.}\smallskip

Let $t\in [0, q_n]$ and $(x, y)\in cl(D)$. In all Cases (I) - (III), the first $n-1$ coordinates of $H_n(t, x, y)$ equal $H_{n-1}(t,x)\in cl(\pi(D))$, by induction. So in Case (I), we are done. For Cases (II) and (III), we  need to check that
$$H_n(t, x,y)_n \le   g H_{n-1}(t,x),$$
In both Cases (II) and (III), if $t< \alpha_x$, we have
$$H_n(t, x, y)_n\le f H_{n-1}(t, x)\le g H_{n-1}(t, x),$$
and if $t\ge\alpha_x$, then
$$H_n(t, x,y)_n \le y\le g(x)= g H_{n-1}(t,x).$$

\vskip.2cm

\noindent   \textbf{(b) $H_n$ is a deformation retraction of $cl(D)$ to $cl(C')$.}\smallskip

By induction, it is easy to verify that $H_n$ is  definable.
We  prove that $H_n$ is continuous. Clearly, by  definition of $H_n$ and induction, it suffices to check that the map $(x, y)\mapsto \alpha_{(x, y)}$ is continuous. We do that by induction as well, where for $n=1$, we have already observed that $\alpha_y$ is continuous. Now,  in Case (I) clearly $\alpha_{(x,y)}=\alpha_x$, so it is continuous. In Cases (II) and (III),  it is easy to see that
$$\alpha_{(x,y)}= \max \{ \alpha_x, \min\{t\in [\alpha_x, q_n]: H_n(t,x,y)_n=y\}\}.$$
Since $\alpha_x$ is continuous and $H_{n-1}$ is controlled, for Case (II) it suffices to check that for any $t>\alpha_x$, the map $$y\mapsto  \min\{t\in [\alpha_x, q_n]: \min\{y, t, t-x+f(x)=y\}\}$$
is continuous. But this is clear. Similarly for Case (III).

\vskip.2cm
We  now verify the three properties of Definition \ref{def-retraction}.

\vskip.2cm\noindent (1) We prove that for every $(x, y)\in cl(D)$, $H_n(0, x, y)\in cl(C')$. In all Cases (I) - (III), the first $n-1$ coordinates of $H_n(0, x, y)$ equal $H_{n-1}(0, x)\in cl(\pi(C'))$, by induction. In Case (I), we are clearly done. In Cases (II) and (III), we only need to check that
$$H_n(0, x, y)_n\le f H_{n-1}(0, x),$$
which is clear from their definition.


\vskip.2cm\noindent (2) We prove that for every $(x, y)\in cl(C')$ and $t\in [0, q_n]$, $H_n(t, x, y)=(x, y)$. In all Cases (I) - (III), the first $n-1$ coordinates of $H_n(t, x, y)$ are $H_{n-1}(t, x)=x$, by induction. Moreover, $\alpha_x=0$. So we only need to prove that
$$H_n(t, x, y)_n=y.$$
In Case (I), it is clear. In Case (II), $(x, y)\in cl(C')$ implies that $y=0$, and hence $H_n(t, x, y)_n=y=0$. In Case (III), $(x, y)\in cl(C')$ implies  $y\le f(x)$, and since $\alpha_x=0$, we have $$y\le f(x) =fH_{n-1}(t, x).$$

\vskip.2cm\noindent (3) We prove that for every $(x, y)\in cl(D)$, $H_n(q_n, x, y)=(x, y)$. By induction, $H_{n-1}(q_{n-1}, x)=x$, and hence $q_n\ge q_{n-1}\ge \alpha_x$. Therefore, in all Cases (I) - (III), the first $n-1$ coordinates of $H_n(q_n, x, y)$ equal $H_{n-1}(q_n, x)=x$.
So we only need to prove
$$H_n(q_n, x, y)_n=y.$$
Case (I) is clear, whereas for Cases (II) and (III), we need to show that
$$y\le q_n\,\, \text{ and }\,\, y\le q_n-\alpha_x+f(x).$$ But in both cases, we have:
$$q_n\ge \sup Im g\ge y,$$
and
$$q_n-\alpha_x+f(x)=q_{n-1}+\sup Im g - \alpha_x + f(x)\ge \sup Im g +f(x)\ge y.$$

\vskip.2cm

\noindent  \textbf{(c) $H_n$ is controlled.}\smallskip

We prove that for every $(x, y)\in cl(D)$ and $t\ge \alpha_{(x, y)}$, $H_n(t, x, y)=(x, y)$. First observe that for $(x, y)\in cl(D)$, $\alpha_x\le \alpha_{(x, y)}$. Indeed, since $H_n(\alpha_{(x, y)}, x, y)=(x, y)$, we have $H_{n-1}(\alpha_{(x, y)},x)=x$ and hence $\alpha_x\le \alpha_{(x, y)}$. Now, Case (I) is clear, whereas for (II) and (III), we need to prove that for $t\ge \alpha_{(x, y)}$,
$$H_n(t, x, y)_n=y.$$
To this end, observe that for $t\ge \alpha_{(x,y)}\ge \alpha_x$, $f H_{n-1}(t, x)=f(x)$ is fixed. Hence each of
$$t,\,\, f H_{n-1}(t, x),\,\, t-\alpha_x+f H_{n-1}(t, x)$$ is  increasing for $t\ge \alpha_{(x, y)}$. So since $y$ is smaller or equal than some of them at $t=\alpha_{(x,y)}$ then so it is for $t\ge \alpha_{(x, y)}$. By definition of Cases (II) and (III), we are done.
\end{proof}

In the next two lemmas, if $H$ is the canonical retraction of $cl(D)$ to $cl(C)$, then $H_1$ denotes the canonical retraction of $cl(\pi(D))$ to $cl(\pi(C))$.

\begin{lemma}\label{Hinside} Let $D$ be a canonical linear cell and $C$ one of its faces. If $H$ is a canonical retraction of $cl(D)$ to $cl(C')$, then $H_{\res [0, q]\times (C\cup D)}$ is a deformation retraction of $C\cup D$ to $C'$.
\end{lemma}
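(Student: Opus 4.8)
The statement asserts that the restriction of the canonical retraction $H$ of $cl(D)$ to $cl(C')$ is a deformation retraction of $C\cup D$ to $C'$. The crucial point to verify is that $H$ maps $[0,q]\times(C\cup D)$ \emph{into} $C\cup D$; once this is known, properties (1)--(3) of Definition~\ref{def-retraction} for $H\res$ follow immediately from the corresponding properties of $H$ (observing that $C'\subseteq C\cup D$ and that $H(0,C\cup D)\subseteq H(0,cl(D))=cl(C')$ will have to be sharpened to land in $C'$, not merely $cl(C')$). So the work splits into two parts: the invariance of $C\cup D$, and the "openness" bookkeeping showing $H(0,-)$ lands in $C'$ rather than its closure.

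\textbf{First step: invariance.} I would prove by induction on $n$ that $H_n([0,q_n]\times(C\cup D))\subseteq C\cup D$, where $C$ is a $(j_1,\dots,j_n)$-face of the canonical $(i_1,\dots,i_n)$-cell $D$. Fix $(x,y)\in C\cup D$. By induction $H_{n-1}(t,x)\in \pi(C)\cup\pi(D)$, using that $\pi(C)$ is a $(j_1,\dots,j_{n-1})$-face of $\pi(D)$ (an observation recorded after the definition of faces). In Case (I) ($i_n=j_n=0$), $H_n(t,x,y)=(H_{n-1}(t,x),0)$ and since both $C$ and $D$ are graphs of $0$ over their domains, this lies in $C\cup D$. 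In Cases (II) and (III), I need to check the last coordinate: one shows $0< H_n(t,x,y)_n < g(H_{n-1}(t,x))$ whenever $H_{n-1}(t,x)\in\pi(D)$ (so the point is in $D$), and when $H_{n-1}(t,x)\in\pi(C)$ — which by niceness forces $t\le\alpha_x$ only if $x\notin\pi(D)$, i.e.\ $x$ already in $\pi(C)$ — the value stays $\le f(H_{n-1}(t,x))$, landing in $cl(C')\subseteq C\cup D$ in Case (III) and equal to $0\in C$ in Case (II). The strict lower bound $H_n(t,x,y)_n>0$ is the delicate inequality: it holds because $y>0$ on $D$, and the other terms $t$, $f H_{n-1}(t,x)$, $t-\alpha_x+fH_{n-1}(t,x)$ are all positive once we are not at a face point — this is where one must be careful about the boundary behaviour, and I expect the argument to hinge on niceness (the Note after Definition~\ref{def-cancontr}) to replace $H_{n-1}(t,x)$ by $x$ for $t\ge\alpha_x$ and thereby reduce to positivity of $f(x)$ and $y$.

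\textbf{Second step: the three retraction properties.} Property (3) ($H(q,-)=1$) is inherited verbatim from Claim~\ref{indeed}. Property (2) ($H(t,-)\res{C'}=1_{C'}$) is likewise inherited since $C'\subseteq cl(C')$ and Claim~\ref{indeed}(2) gives fixing on all of $cl(C')$. Property (1) requires $H(0,C\cup D)=C'$: Claim~\ref{indeed}(1) gives $H(0,x,y)\in cl(C')$, and I must upgrade "closure" to "the half-cell itself" — equivalently, check the value never hits the closed-off boundary faces of $cl(C')$ that are not in $C'$. By construction $C'=(0,F]_{A'}$ (or $\Gamma(0)_{A'}$, or the $n=1$ cases), so I need the first $n-1$ coordinates to land in $\pi(C')=A'$ (inductive hypothesis) and the last coordinate, which is $\le fH_{n-1}(0,x)=f$-value, to be $>0$ — again the positivity point, plus the fact that $A'$ is itself of half-cell form so its "lower" boundary is excluded inductively. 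This is really the same obstacle as in the first step, repackaged.

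\textbf{Main obstacle.} The genuine difficulty is uniformly the \emph{strict} inequalities: showing the retracted point stays in the \emph{open} parts of $C$ and $D$ (i.e.\ $y$-coordinate strictly positive, strictly below $g$) rather than drifting onto an excluded boundary piece. Everything else is formal inheritance from Claim~\ref{indeed}. The resolution is to track, via the "nice" property and the explicit $\min$-formulas, that for a point genuinely in $D$ the output stays in $D$, and for a point in the face $C$ (where $x$ is already fixed, $\alpha_x=0$) the output stays in $cl(C')$, and $cl(C')\cap(C\cup D)$-membership then follows because $cl(C')\subseteq cl(C)\subseteq cl(C\cup D)$ — but one must confirm $cl(C')\cap\partial D$ does not leak outside $C\cup D$, which uses that $C$ is a face of the canonical cell $D$ so $cl(C')\subseteq cl(C)$ and $C\subseteq C\cup D$ by hypothesis. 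I would organize the induction so that the invariance claim and the two boundary-sharpenings are proved simultaneously, exactly as Definition~\ref{def-cancontr} and Claim~\ref{indeed} are run in parallel.
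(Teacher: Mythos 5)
Your overall strategy is the paper's: induct on $n$, observe that everything except the invariance $H([0,q]\times(C\cup D))\subseteq C\cup D$ is inherited (or routinely sharpened) from Claim \ref{indeed}, and verify the invariance coordinate by coordinate through Cases (I)--(III). However, the execution of the crucial step does not go through as written. First, the inclusion $cl(C')\subseteq C\cup D$, which you invoke to dispose of the sub-case $H_{n-1}(t,x)\in\pi(C)$ and again in your closing paragraph, is false: already for $C=D=(0,a)$ one has $0\in cl(C')=[0,a/2]$ but $0\notin C\cup D$, and in general the ``bottom'' $\Gamma(0)_{cl(\pi(C'))}$ of $cl(C')$ lies outside $C\cup D$ whenever $j_n=1$. (That $cl(C')\subseteq cl(C\cup D)$ is not enough.) Second, your dichotomy ``$H_{n-1}(t,x)\in\pi(D)$ implies $0<H_n(t,x,y)_n<gH_{n-1}(t,x)$, hence the image is in $D$'' fails in Case (II): there the last coordinate is $\min\{y,t,\dots\}$, which equals $0$ at $t=0$, and whenever $y=0$ or $f$ vanishes at $H_{n-1}(t,x)$, even though $H_{n-1}(t,x)$ may well lie in $\pi(D)$ (e.g.\ when $\pi(C)=\pi(D)$). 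In those situations one must instead show the image equals $(H_{n-1}(t,x),0)$ and lies in $C$, which requires $j_n=0$ and $H_{n-1}(t,x)\in\pi(C)$ --- conclusions your case split does not produce.

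Relatedly, you locate the delicate point in the strict lower bound, but in Case (III) positivity is immediate (both $y$ and $fH_{n-1}(t,x)$ are positive, since $j_n=1$ forces $g>0$ on $\pi(C)\cup\pi(D)$), and in Case (II) the lower bound is not supposed to be strict. The genuinely delicate point, which the paper's proof isolates, is the strictness of the upper bound $H(t,x,y)_n<gH_{n-1}(t,x)$: the formulas only give $H(t,x,y)_n\le fH_{n-1}(t,x)\le gH_{n-1}(t,x)$ for $t<\alpha_x$ and $H(t,x,y)_n\le y\le g(x)=gH_{n-1}(t,x)$ for $t\ge\alpha_x$, and when either chain degenerates to equality one argues (from the definition of half-maps, and from $y=g(x)$ being impossible on an open fiber) that $g$ vanishes at the relevant point, which forces $j_n=0$ and places the image exactly at $(H_{n-1}(t,x),0)\in C$. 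Your ``Second step'' rightly flags that Claim \ref{indeed}(1) must be upgraded from $cl(C')$ to $C'$ --- something the paper leaves implicit --- but its resolution leans on the same flawed positivity analysis. So the skeleton matches the paper, but the case analysis at the heart of the induction needs to be redone along the lines above.
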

\begin{proof} The proof resembles that of Claim \ref{indeed}(a). For $n=1$, it is  immediate.  Let $n>1$, $t\in [0, q_n]$ and $(x, y)\in C\cup D$.  We need to check that $H(t, x, y)\in C\cup D$.
In all Cases (I) - (III), the first $n-1$ coordinates of $H(t, x, y)$ equal $H_1(t,x)\in \pi(C\cup D)$, by induction. So in Case (I), we are done. For Cases (II) and (III), we  need to check that
$$H(t, x,y)_n <   g H_1(t,x),$$
and in Case (III), we  need moreover  $0<H(t, x,y)_n$.
The latter is clear since in Case (III), $(x, y)\in C\cup D$ implies that both $y$ and $f H_1(t, x)$ are positive. For the former, in both Cases (II) and (III), if $t< \alpha_x$, we
have
$$H(t, x, y)_n\le f H_1(t, x)\le g H_1(t, x).$$
If the last inequality if strict, we are done. Assume  $f H_1(t, x)= g H_1(t, x)$. By the definition of half-maps, this can only happen if $H_1(t, x)\in \pi(C)$ and $f_{|\pi(C)}=g_{|\pi(C)}=0$. However, that would imply $j_n=0$, and hence
$$H(t, x, y)=(H_1(t,x), 0)\in C.$$
If $t\ge\alpha_x$, then
$$H(t, x,y)_n \le y\le g(x)= g H_1(t,x),$$
Again, if the second inequality is strict, we are done. On the other hand, the equation $y=g(x)$ can only happen if $x\in \pi(C)$ and $y=0$. But that  would imply $j_n=0$, and hence again
$$H(t, x, y)=(H_1(t,x), 0)\in C.$$
\end{proof}

%

\begin{lemma}\label{extend} Let $C, E, D\sub R^n$ be  three canonical linear cells, and assume that $C$ is a face of $E$, and $E$ is a face of $D$. Let $H$ and $H'$ be the canonical retractions of $cl(E)$ and $cl(D)$ to $cl(C')$, respectively. Then $H'$ extends $H$.
\end{lemma}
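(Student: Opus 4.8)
The statement is a compatibility (``nesting'') property of canonical retractions, and the natural route is induction on $n$ that runs in parallel with the recursive Definition \ref{def-cancontr}, exactly as Claim \ref{indeed} does. First I would record the structural relations among the three cells: since $C$ is a face of $E$ and $E$ is a face of $D$, transitivity of ``being a face'' (noted after the definition of faces) gives that $C$ is a face of $D$, so all three canonical retractions to $cl(C')$ are defined. Passing to domains, $\pi(C)$ is a face of $\pi(E)$ and $\pi(E)$ is a face of $\pi(D)$, so by induction the claim holds one dimension down: writing $H_1$, $H_1'$ for the canonical retractions of $cl(\pi(E))$, $cl(\pi(D))$ to $cl(\pi(C'))$, we have that $H_1'$ extends $H_1$. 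In particular $q_{n-1}\le q_{n-1}'$ and $H_1'$ agrees with $H_1$ on $[0,q_{n-1}]\times cl(\pi(E))$. The key point to carry along is that the fixing point $\alpha_x$ of a point $x\in cl(\pi(E))$ is the same whether computed for $H_1$ or for $H_1'$: this is because $H_1'$ extends $H_1$ and $H_1$ is nice, so $H_1(t,x)=x$ for $t\ge\alpha_x$ forces $H_1'(t,x)=x$ for all $t\in[0,q_{n-1}']$ once $t\ge\alpha_x$, using niceness of $H_1'$ as well; hence the ``$t<\alpha_x$ / $t\ge\alpha_x$'' split in the formulas for $H$ and $H'$ is the same split.

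Next I would compare the two definitions of $q_n$. The index of $E$ in its $n$th coordinate equals that of $D$: more precisely, since $E$ is a face of $D$, if $D$ has $n$th defining map $0$ then so does $E$, and if $D=(0,g)_{\pi(D)}$ then $E$ is either $\Gamma(0)_{\pi(E)}$ (the case $i_n^E=0$) or $(0,g)_{\pi(E)}$ (the case $i_n^E=1$, with the same $g$ restricted). In the first two situations $q_n=q_{n-1}\le q_{n-1}'\le q_n'$; in the last, $q_n=q_{n-1}+\sup Im\, g_{|\pi(E)}\le q_{n-1}'+\sup Im\, g_{|\pi(D)}=q_n'$, since $\pi(E)\sub cl(\pi(D))$ so the sup over the smaller domain is at most the sup over the larger. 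Either way $q_n\le q_n'$. Then I would fix $(t,x,y)\in[0,q_n]\times cl(E)$ and verify $H_n(t,x,y)=H_n'(t,x,y)$ coordinatewise. The first $n-1$ coordinates coincide because $H_1'$ extends $H_1$ and $x\in cl(\pi(E))$, $t\le q_n\le$ (appropriate bound). For the last coordinate I would go through Cases (I), (II), (III) as selected by the $n$th index of $E$: note that $E$ being a face of $D$ forces $D$'s $n$th index to be $\ge$ that of $E$, so e.g. if $E$ falls in Case (I) ($i_n^E=j_n=0$) then for $D$ we are in Case (I) or Case (II), and in both the last coordinate output is $0$; if $E$ is in Case (II) ($i_n^D=i_n^E$ or $i_n^D>i_n^E$ but $j_n=0$ still) the $\min$ expressions match once one checks the half-maps agree — which is precisely Lemma \ref{ff'}, giving $f^E=f^D_{|\pi(E)}$ — and $\alpha_x$, $q$ (via $q_n-\alpha_x$ vs $q_n'-\alpha_x$, but only the $t\ge\alpha_x$ branch with $t-\alpha_x+fH_1(t,x)$ matters and there $H_1(t,x)=x$) all match; Case (III) is identical with the $\min$ running over one fewer term. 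The upshot is that in every case the defining formula for $H_n'(t,x,y)$, when $x\in cl(\pi(E))$, $y$ admissible for $cl(E)$, and $t\le q_n$, literally reduces to the defining formula for $H_n(t,x,y)$.

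The main obstacle I anticipate is purely bookkeeping rather than conceptual: lining up the case analysis for $E$-in-$D$ (the $n$th index of $E$ can be strictly below that of $D$, so ``Case (II) for $E$'' can sit inside ``Case (II) or (III) for $D$'', etc.) and making sure that the half-map identity (Lemma \ref{ff'}) and the equality of fixing points are invoked at the right spots so that the $\min$-expressions coincide term by term. One subtlety worth stating explicitly: when $t\ge\alpha_x$, niceness of both $H_1$ and $H_1'$ collapses $H_1(t,x)$ and $H_1'(t,x)$ to $x$, so the potentially different values $q_n-\alpha_x$ and $q_n'-\alpha_x$ never enter — only $t-\alpha_x$ with the \emph{same} $t\le q_n$ appears — which is what makes the extension genuine rather than just ``agreeing up to reparametrization''. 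Once these alignments are in place, the coordinatewise equality $H_n(t,x,y)=H_n'(t,x,y)$ for all $(t,x,y)\in[0,q_n]\times cl(E)$ is immediate, and together with $q_n\le q_n'$ this is exactly the assertion that $H'$ extends $H$.
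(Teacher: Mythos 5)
Your proposal is correct and follows essentially the same route as the paper's proof: induction on $n$, agreement of the first $n-1$ coordinates via the inductive hypothesis, coincidence of the fixing points $\alpha_x$ because $H_1'$ extends $H_1$, coincidence of the half-maps via Lemma \ref{ff'}, and a case split on the $n$th index of $E$ (the paper handles the case $j_n=0$ by noting $y=0$ forces the last coordinate of both retractions to vanish, which is the same observation you make implicitly). Your extra bookkeeping on $q_n\le q_n'$ and the case alignment is just a more explicit version of what the paper leaves to the reader.
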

\begin{proof} For $n=1$, this is immediate. Assume $D$ is a $(i_1, \dots, i_n)$-cell and $E$ a $(j_1, \dots, j_n)$-cell, $n>1$. Let $[0, q]$ and $[0, q']$ be the parameter sets of $H$ and $H'$, respectively. By induction, it is easy to see that $q'\ge q$. Let $t\in [0, q]$ and $(x, y)\in cl(E)$. We need to prove that $H(t, x, y)=H'(t, x, y)$.
By induction, $H_1(t, x)= H'_1(t, x)$. Hence, we only have to show
$$H(t, x, y)_n=H'(t, x, y)_n.$$

If $j_n=0$, then $y=0$, and clearly $H(t, x, y)_n=0$, by Case (I) of Definition \ref{def-cancontr}, whereas $H'(t, x, y)_n=0$, by Cases  (I) - (III).

So let $i_n=j_n=1$. Since $H'_1$ extends $H_1$, the corresponding fixing points $\alpha_x$ and $\alpha'_x$  coincide, whereas by Lemma \ref{ff'}, so do the half-maps of $g$ and $g'$ on $cl(\pi(E))$. It follows immediately that $H(t, x, y)_n=H'(t, x, y)_n$.
\end{proof}

\begin{remark}\label{explainC'}
It is possible to define a canonical retraction of $cl(D)$ to $cl(C)$, as opposed to $cl(C')$. However, Lemma \ref{Hinside} then becomes more difficult to achieve. Indeed, if we resembled Definition \ref{def-cancontr}, we would first need to replace the notion of a half-map by some suitable map which equals $g$ on $\pi(C)$. Then the resulting canonical retraction, restricted to $C\cup D$, would give a retraction of $C\cup D$ to $(0, g]_{\pi(C)}$ instead of $C=(0, g)_{\pi(C)}$.  To overcome this issue, one needs to give a  more elaborate definition of a canonical retraction, which we avoided doing here. We note that our canonical retraction is \emph{not} the concatenation of two retractions, one from $cl(D)$ to $cl(D')$, and then from $cl(D')$ to $cl(C')$.
\end{remark}



\subsection{Arbitrary linear cells}

We  now  extend the definition of canonical retractions to arbitrary bounded linear cells. The idea is simply to first map each such cell $D$ to a canonical linear cell $T(D)$, such that if $C\sub cl(D)$ is another linear cell and $c$ is a common `corner' of $C$ and $D$, then $T(c)$ becomes the origin, and $T(C)$ a face of $T(D)$. We then pullback the canonical retraction of $cl(T(D))$ to $cl(T(C)')$, to a deformation retraction of $cl(D)$ to $cl(T^{-1}(T(C)'))$, where $T(C)'$ is the half-cell of $T(C)$.

\begin{defn}[Corners of a linear cell]
Let $D\sub R^n$ be a linear $(i_1, \dots, i_n)$-cell. We define, recursively on $n$, the set of \emph{corners of $D$}. Let $(l_1, \dots, l_n)\le (i_1, \dots, i_n)$. 
\begin{enumerate}
  \item A point $c\in R$ is a $(l_1)$-corner of $D$ if
  \begin{itemize}
    \item $D$ is the singleton $\{c\}$, or
    \item if $D$ is an interval, and $c$ is the left endpoint, if $l_1=0$, and the right endpoint, if $l_1=1$.
  \end{itemize}

   \item A point $c\in R^n$ is a \emph{$(l_1, \dots, l_n)$-corner of $D$} if $a=\pi(c)$ is a $(l_1, \dots, l_{n-1})$-corner of $A=\pi(D)$ and
       \begin{itemize}
         \item $D=\Gamma(f)_A$ and $c=(a, f(a))$, or
         \item $D=(f, g)_A$, and $c=(a, f(a))$, if $l_n=0$, and $c=(a, g(a))$ if $l_n=1$.
       \end{itemize}
\end{enumerate}
A \emph{corner of $D$} is a $(l_1, \dots, l_n)$-corner for some $(l_1, \dots, l_n)$.
\end{defn}
Observe that if $D=(f, g)_A$, as above, with $f(a)=g(a)$, then its $(l_1, \dots, l_{n-1}, 0)$-corner and $(l_1, \dots, l_{n-1}, 1)$-corner coincide.

\begin{lemma}\label{specialcorner}
 Let $\cal C$ be a special linear decomposition of a definable set in $R^n$, $C, D\in \cal C$ two linear cells, and $c$ a corner of $C$. If $C\cap cl(D)\ne \emptyset$, then $c$ is also a corner of $D$.
\end{lemma}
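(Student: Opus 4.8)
The plan is to proceed by induction on $n$, mirroring the recursive definitions of corners and of special linear decompositions. The base case $n=1$ is routine: if $C\cap cl(D)\ne\emptyset$ then in the linear decomposition $\pi(\cal C)$ of $R$ the cells $C,D$ are intervals or points, and since $C\cap cl(D)\ne\emptyset$, the corner $c$ of $C$ (a singleton or an endpoint) must coincide with an endpoint of $D$ or with $D$ itself; in all cases $c$ is a corner of $D$. For $n>1$, write $A=\pi(D)$, $B=\pi(C)$, $a=\pi(c)$. First I would observe that $\pi(\cal C)$ is a special linear decomposition of a definable set in $R^{n-1}$ (Definition \ref{def-special}(1)), and that $B\cap cl(A)\ne\emptyset$: indeed, from $C\cap cl(D)\ne\emptyset$ and the fact that $\pi(cl(D))=cl(\pi(D))=cl(A)$ for a cell $D$ (by the cited \cite[Lemma 2.8]{eep2}), one gets $B\cap cl(A)\supseteq \pi(C\cap cl(D))\ne\emptyset$. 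Since $a$ is a corner of $B$, the inductive hypothesis gives that $a$ is a corner of $A$.

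It remains to handle the last coordinate. Pick a point $p=(x,y)\in C\cap cl(D)$; then $x\in B\cap cl(A)$, so $\pi_n(C)$ at $x$ is one of the slices described by the cell-maps of $C$ and $y$ lies in $cl$ of the corresponding slice of $D$. I would split into cases according to the index $i_n$ of $D$ and $j_n$ of $C$. If $D=\Gamma(f)_A$, then $cl(D)\subseteq \Gamma(f)_{cl(A)}$, so any point of $C\cap cl(D)$ has last coordinate $f$ of its projection; pushing this through Fact \ref{fact-special2} (applied to $C$ and $D$, giving $C\subseteq cl(D)$) together with Definition \ref{def-special}(2) and (3), one concludes that the cell-map(s) of $C$ agree with $f$ on $cl(B)$, in particular at $a$, so the last coordinate of the corner $c=(a,f(a))$ is $f(a)$, i.e. $c$ is the appropriate corner of $D$. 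If $D=(f,g)_A$, I would use Fact \ref{fact-special2} to get $C\subseteq cl(D)$, hence on $cl(B)$ the cell-maps of $C$ are squeezed between $f$ and $g$; then Definition \ref{def-special}(2) (for a graph cell $C$) or (3) (for an open-band cell $C$) forces the relevant cell-map of $C$ to equal $f$ or $g$ on $cl(B)$. Matching the $l_n$-component of the corner $c$ of $C$ to the corresponding endpoint cell-map, and using that $f(a),g(a)$ are exactly the last coordinates of the corners of $D$ over $a$, yields that $c$ is a corner of $D$.

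The main obstacle I anticipate is the bookkeeping in the last-coordinate case: one must carefully track how the corner index $(l_1,\dots,l_n)$ of $C$ determines which endpoint of $C$'s band is selected, and then match it to the correct endpoint of $D$'s band, all while invoking the right clause of Definition \ref{def-special} depending on whether $C$ is a graph cell or a band cell and similarly for $D$. The subtle point is that $C\cap cl(D)\ne\emptyset$ only gives information about the relative position of the cell-maps on $cl(B)$ through Fact \ref{fact-special2} and the specialness conditions — in particular Definition \ref{def-special}(3) is what rules out the cell-map of $C$ threading strictly between $f$ and $g$ at the corner point, which is exactly what could otherwise make $c$ fail to be a corner of $D$. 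Degenerate situations where $f(a)=g(a)$ (so the two corners of $D$ over $a$ coincide, as noted after the definition) should be checked to cause no problem, since then any corner of $C$ over $a$ has last coordinate $f(a)=g(a)$ and is automatically a corner of $D$.
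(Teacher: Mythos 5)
Your proposal is correct and follows essentially the same route as the paper: induction on $n$ (passing through $\pi(\cal C)$), Fact \ref{fact-special2} to upgrade $C\cap cl(D)\ne\emptyset$ to $C\subseteq cl(D)$, and then the specialness conditions of Definition \ref{def-special}(2),(3) to force the cell-map(s) of $C$ to coincide with $f$ or $g$ on $cl(\pi(C))$, hence at $a$. The paper's proof is terser but the underlying argument and case split (graph cell vs.\ band cell for $D$, then for $C$) are the same; your extra care about tracking the corner index $(l_1,\dots,l_n)$ is unnecessary since the lemma only asserts $c$ is \emph{some} corner of $D$.
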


\begin{proof}
By Fact \ref{fact-special2}, we know that $C\sub cl(D)$. We work by induction on $n$. For $n=1$, it is immediate. For $n>1$, let $A=\pi(D)$ and observe by induction that $a=\pi(c)$ is a corner of $A=\pi(D)$. For $D=\Gamma(f)_A$, it is then clear that $(a, f(a))$ is a corner of $D$. Now let $D=(f, g)_A$. Since $C\sub cl(D)$, we have two  cases: \smallskip


\noindent \textbf{Case I.} $C=\Gamma(h)_B\sub (f, g)_A$, and $B\sub cl(A)$. Since \cal C is special (Definition \ref{def-special}(3)), we have $h=f_{|B}$ or $h=g_{|B}$, and hence $c=(a, f(a))$ or $c=(a, g(a))$, respectively, which are both corners of $D$.\smallskip

\noindent \textbf{Case II.} $C=(k, h)_B\sub (f, g)_A$. Again, by Definition \ref{def-special}(3), we must also have $k=f_{|B}$ and $h=g_{|B}$, and hence $c=(a, f(a))$ or $c=(a, g(a))$, which are both corners of $D$.
\end{proof}

\begin{defn}[Canonical transformation of a bounded linear cell]
Let $D\sub R^n$ be a linear $(i_1, \dots, i_n)$-cell and $c$ its $(l_1, \dots, l_n)$-corner.
The \emph{canonical transformation $T_{D, c}$ associated to $D$ and $c$} is a linear map $T=T_{D, c}: cl(D)\to R^n$, defined recursively as follows.
\begin{enumerate}
  \item For $n=1$,

  \begin{itemize}
    \item if $D=\{c\}$, then $T(c)=0$.
  \item if $D$ is an interval, then $T(x)= |x- c|$.
  \end{itemize}

  \item For $n>1$, let $A=\pi(D)$ and $a=\pi(c)$. 

  \begin{itemize}
    \item  If $D=\Gamma(f)_A$, then $T(x, f(x))=(T_{A, a}(x), 0)$.
    \item If $D=(f, g)_A$, then
    $$T(x, t)=
    \begin{cases}
      (T_{A, a}(x), t-f(x)), & \text{ if $l_n=0$},\\
      (T_{A, a}(x), g(x)-t), & \text{ if $l_n=1$}.
    \end{cases}
    $$
  \end{itemize}
\end{enumerate}
We call $D_c=T_{D, c}(D)$ the \emph{canonical transformation of $D$ with respect to $c$}. 
\end{defn}

\begin{remark}\label{canret2} It is straightforward to check that:
\begin{enumerate}
  \item $D_c$ is a canonical linear $(i_1, \dots, i_n)$-cell.
  \item Let $\cal C$ be a special linear decomposition of a definable set, and $C, D\in \cal C$ two bounded cells. If $cl(C)\sub cl(D)$ and $c$ is a common corner of $C$ and $D$, then $T_{D,c}$ agrees with $T_{C, c}$ on $cl(C)$. Moreover, $C_c, D_c$ belong to a special linear decomposition of a definable set (their union), and $C_c\sub cl(D_c)$. Hence, by Claim \ref{bdry}, $C_c$ is a face of $D_c$.
  \end{enumerate}
\end{remark}

\begin{defn}[Canonical retractions of bounded  linear cells]\label{def-cangen}
Let $\cal C$ be a special linear decomposition of some definable set,  $C, D\in \cal C$ with $C\sub cl(D)$, and $c$ a common corner of $C$ and $D$. Let $C'_c$ be the half-cell of $C_c$ and $C'=T^{-1}_{C, c}(C'_c)$. We call $C'$ the  \emph{$c$-half-cell} of $C$.

Now let $H_1:[0, q]\times cl(D_c)\to cl(D_c)$ be the canonical retraction of $cl(D_c)$ to $cl(C'_c)$. We define the \emph{$c$-canonical retraction of $cl(D)$ to $C'$} to be the map $H_c:[0, q]\times cl(D)\to cl(D)$, given by:
$$H_c(t, -)=T_{D, c}^{-1} \circ H_1(t,-) \circ T_{D, c}.$$
\end{defn}

\begin{claim}\label{contract2} Let $c, C$ be as above. The $c$-half-cell of $C$ is definably contractible.
\end{claim}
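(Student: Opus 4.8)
The plan is to reduce Claim~\ref{contract2} to Lemma~\ref{contract}, which already establishes that the half-cell $C'_c$ of a \emph{canonical} linear cell $C_c$ is definably contractible. Recall that by definition $C' = T^{-1}_{C,c}(C'_c)$, where $C'_c$ is the half-cell of the canonical linear cell $C_c = T_{C,c}(C)$. So it suffices to transport the definable contraction of $C'_c$ back along the linear bijection $T_{C,c}$.

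First I would observe that $T = T_{C,c}\colon cl(C)\to R^n$ is a definable, continuous, injective linear map (this is immediate from the recursive definition, where at each coordinate we apply an affine shift/reflection). Hence its restriction to $C$ is a definable homeomorphism onto its image $C_c$, with definable continuous inverse $T^{-1}$; in particular $T$ restricts to a definable homeomorphism $C' = T^{-1}(C'_c) \to C'_c$. Next, by Lemma~\ref{contract}, there is a definable contraction $H_1\colon [0,q]\times C'_c \to C'_c$ of $C'_c$ to some point $c_0\in C'_c$. I would then define
$$H(t,x) = T^{-1}\bigl(H_1(t, T(x))\bigr), \qquad (t,x)\in [0,q]\times C'.$$
This is definable and continuous as a composition of such maps, it takes values in $C'$ since $H_1$ takes values in $C'_c$ and $T^{-1}(C'_c) = C'$, and one checks the three clauses of Definition~\ref{def-retraction} pointwise: $H(0,x) = T^{-1}(H_1(0,T(x))) = T^{-1}(c_0)$ is a constant, $H(t,-)$ restricted to the singleton $\{T^{-1}(c_0)\}$ is the identity, and $H(q,x) = T^{-1}(H_1(q,T(x))) = T^{-1}(T(x)) = x$. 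Thus $H$ is a definable contraction of $C'$ to the point $T^{-1}(c_0)$, so $C'$ is definably contractible.

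There is no real obstacle here; the only thing to be slightly careful about is that the ``canonical retraction of bounded linear cells'' and ``$c$-half-cell'' notions in Definition~\ref{def-cangen} are only defined when $C,D$ lie in a common special linear decomposition with $C\subseteq cl(D)$ and $c$ a common corner, so the statement implicitly carries those hypotheses; but they are not needed for the contractibility argument, which uses only that $C_c$ is a canonical linear cell and $T_{C,c}$ is a definable homeomorphism onto it — both guaranteed by Remark~\ref{canret2}(1) together with the definition of $T_{C,c}$. So the proof is essentially a one-line transport of Lemma~\ref{contract} along the canonical transformation, and I would write it as such.
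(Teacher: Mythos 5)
Your proof is correct and is exactly what the paper intends: the paper's own proof is just ``By Lemma \ref{contract}'', leaving implicit the transport of the contraction of $C'_c$ back along the definable homeomorphism $T_{C,c}$, which you spell out. No issues.
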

\begin{proof}
  By Lemma \ref{contract}.
\end{proof}

\begin{claim}\label{Hinside2}
Let $C, D$, $c$, $C'$ and $H_c$ be as above. Then $(H_c)_{\res [0, q]\times (C\cup D)}$ is a deformation retraction of $C\cup D$ to $C'$.
\end{claim}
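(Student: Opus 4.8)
The plan is to reduce Claim \ref{Hinside2} to Lemma \ref{Hinside} by transporting everything through the canonical transformation $T = T_{D,c}$. Recall that $H_c(t,-) = T^{-1}\circ H_1(t,-)\circ T$, where $H_1$ is the canonical retraction of $cl(D_c)$ to $cl(C'_c)$, and that, by Remark \ref{canret2}, $T$ restricts on $cl(C)$ to $T_{C,c}$ and sends $C_c$ to a face of $D_c$. Since $T$ is a linear bijection from $cl(D)$ onto $cl(D_c)$ (and it maps $C$ onto $C_c$ and $D$ onto $D_c$ respectively, as these are linear cells transformed coordinatewise), it restricts to a bijection $C\cup D \to C_c\cup D_c$. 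Thus $(H_c)_{\res[0,q]\times(C\cup D)} = T^{-1}\circ (H_1)_{\res[0,q]\times(C_c\cup D_c)}\circ T$, conjugating the restricted map of Lemma \ref{Hinside} by $T$.

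First I would observe that $T$ maps $C$ bijectively onto $C_c$ and $D$ bijectively onto $D_c$: this is a straightforward induction on $n$ using the definition of the canonical transformation, the key point being that at each coordinate the affine substitution $t\mapsto t-f(x)$ (or $g(x)-t$) carries the cell-maps of $D$ to the cell-maps $0$ (or $(0,g_n)$) of $D_c$, while being a bijection on each fibre. Consequently $T(C\cup D) = C_c\cup D_c$. Second, by Lemma \ref{Hinside} (applicable because $C_c$ is a face of $D_c$ and $H_1$ is a canonical retraction of $cl(D_c)$ to $cl(C'_c)$), the restriction $(H_1)_{\res[0,q]\times(C_c\cup D_c)}$ is a deformation retraction of $C_c\cup D_c$ to $C'_c$.

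Finally I would conclude by transporting this deformation retraction back. Since $T$ is a homeomorphism between $cl(D)$ and $cl(D_c)$ (a linear bijection), and $(H_c)(t,-) = T^{-1}\circ H_1(t,-)\circ T$, the restriction of $H_c$ to $[0,q]\times(C\cup D)$ has image $T^{-1}((H_1)([0,q]\times(C_c\cup D_c))) = T^{-1}(C_c\cup D_c) = C\cup D$, so it is a well-defined continuous definable map into $C\cup D$. The three conditions of Definition \ref{def-retraction} are preserved under this conjugation: $H_c(0,-)$ has image $T^{-1}(C'_c) = C'$ by definition of the $c$-half-cell; $H_c(t,-)$ is the identity on $C' = T^{-1}(C'_c)$ because $H_1(t,-)$ is the identity on $C'_c$; and $H_c(q,-) = 1_{C\cup D}$ because $H_1(q,-) = 1_{C_c\cup D_c}$. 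Continuity and definability are clear since $T$ and $T^{-1}$ are linear.

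The main obstacle — really the only non-formal point — is verifying that $T$ indeed carries $C\cup D$ onto $C_c\cup D_c$; this is where one must be careful that the recursive coordinatewise definition of the canonical transformation genuinely produces bijections on both cells simultaneously and respects the inclusion $C\subseteq cl(D)$, so that $T_{D,c}$ and $T_{C,c}$ agree on $cl(C)$. This is precisely the content of Remark \ref{canret2}(2), so once that is invoked the rest is bookkeeping; everything else is routine transport of structure along the linear homeomorphism $T$.
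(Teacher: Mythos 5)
Your proposal is correct and is exactly the argument the paper intends: its proof of Claim \ref{Hinside2} is simply ``By Lemma \ref{Hinside} and Remark \ref{canret2}(2)'', i.e.\ conjugate the retraction of Lemma \ref{Hinside} by the linear homeomorphism $T_{D,c}$, using Remark \ref{canret2}(2) to see that $T_{D,c}$ carries $C$ onto the face $C_c$ of $D_c$ and agrees with $T_{C,c}$ on $cl(C)$. You have merely written out the transport-of-structure bookkeeping that the paper leaves implicit.
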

\begin{proof}
 By Lemma \ref{Hinside} and Remark \ref{canret2}(2).
\end{proof}

\begin{claim}\label{extend2} Let $\cal C$ be a special linear decomposition of a definable set, and $C, E, D\sub \cal C$ three bounded cells. Assume that $C\sub cl(E)\sub cl(D)$, and that $c$ a common corner of $C, E, D$. Let $H_c$ and $H_c'$ be the $c$-canonical retractions of $cl(E)$ and $cl(D)$ to the $c$-half-cell $C'$ of $C$, respectively. Then $H_c'$ extends $H_c$.
\end{claim}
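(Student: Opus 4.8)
The plan is to reduce Claim \ref{extend2} to its already-established canonical counterpart, Lemma \ref{extend}, via the canonical transformation. First I would observe that, since $c$ is a common corner of $C$, $E$ and $D$ and $C\sub cl(E)\sub cl(D)$, Remark \ref{canret2}(2) applies twice: $T_{D,c}$ restricts to $T_{E,c}$ on $cl(E)$, which in turn restricts to $T_{C,c}$ on $cl(C)$; moreover $0\in cl(C_c)\sub cl(E_c)\sub cl(D_c)$, and by Claim \ref{bdry} together with Remark \ref{canret2}(1), $C_c$ is a face of $E_c$ and $E_c$ is a face of $D_c$ (all three being canonical linear cells). Thus the hypotheses of Lemma \ref{extend} are met for the triple $C_c, E_c, D_c$.

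Next I would name the canonical retractions $H^{\flat}:[0,q]\times cl(E_c)\to cl(E_c)$ and $H'^{\flat}:[0,q']\times cl(D_c)\to cl(D_c)$ of $cl(E_c)$ and $cl(D_c)$ to $cl(C'_c)$; by Lemma \ref{extend}, $q\le q'$ and $H'^{\flat}$ extends $H^{\flat}$, i.e.\ $H'^{\flat}(t,z)=H^{\flat}(t,z)$ for all $(t,z)\in [0,q]\times cl(E_c)$. By Definition \ref{def-cangen}, $H_c(t,-)=T_{E,c}^{-1}\circ H^{\flat}(t,-)\circ T_{E,c}$ and $H_c'(t,-)=T_{D,c}^{-1}\circ H'^{\flat}(t,-)\circ T_{D,c}$, and the same $q$ and $q'$ serve as parameter sets, so $q\le q'$ holds. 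Now fix $(t,x)\in [0,q]\times(C\cup E)$. Wait---the relevant domain for ``extends'' in the sense of the paper is $[0,q]\times cl(E)$, so take $(t,x)\in[0,q]\times cl(E)$. Then $T_{D,c}(x)=T_{E,c}(x)\in cl(E_c)$, hence $H'^{\flat}(t,T_{D,c}(x))=H^{\flat}(t,T_{E,c}(x))\in cl(E_c)$, and applying $T_{D,c}^{-1}=T_{E,c}^{-1}$ on $cl(E_c)$ gives
$$H_c'(t,x)=T_{D,c}^{-1}H'^{\flat}(t,T_{D,c}(x))=T_{E,c}^{-1}H^{\flat}(t,T_{E,c}(x))=H_c(t,x).$$
This is exactly the statement that $H_c'$ extends $H_c$.

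The only genuinely delicate point---hardly an obstacle, but the place requiring care---is bookkeeping around the transformations: one must be sure that $T_{D,c}$ and $T_{E,c}$ not only agree on $cl(E)$ but that their inverses agree on $cl(E_c)=T_{E,c}(cl(E))$, so that pulling back $H^{\flat}$ and $H'^{\flat}$ through the two (a priori different) maps yields the same value on $cl(E)$; this follows since a linear bijection is determined by its values and $T_{D,c}|_{cl(E)}=T_{E,c}$ forces $T_{D,c}^{-1}|_{cl(E_c)}=T_{E,c}^{-1}$. With that noted, the rest is a direct substitution, and the proof is short.

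\begin{proof}
Since $c$ is a common corner of $C$, $E$, $D$ and $C\sub cl(E)\sub cl(D)$, Remark \ref{canret2}(2) gives that $T_{D,c}$ agrees with $T_{E,c}$ on $cl(E)$, which agrees with $T_{C,c}$ on $cl(C)$, and that $0\in cl(C_c)\sub cl(E_c)\sub cl(D_c)$. By Remark \ref{canret2}(1), $C_c, E_c, D_c$ are canonical linear cells, so Claim \ref{bdry} yields that $C_c$ is a face of $E_c$ and $E_c$ is a face of $D_c$. Let $H^{\flat}:[0,q]\times cl(E_c)\to cl(E_c)$ and $H'^{\flat}:[0,q']\times cl(D_c)\to cl(D_c)$ be the canonical retractions to $cl(C'_c)$. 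By Lemma \ref{extend}, $q\le q'$ and $H'^{\flat}(t,z)=H^{\flat}(t,z)$ for all $(t,z)\in [0,q]\times cl(E_c)$.

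By Definition \ref{def-cangen}, $H_c$ and $H_c'$ have parameter sets $[0,q]$ and $[0,q']$, so $q\le q'$. Fix $(t,x)\in [0,q]\times cl(E)$. Then $T_{D,c}(x)=T_{E,c}(x)\in cl(E_c)$, so
$$H'^{\flat}\bigl(t,T_{D,c}(x)\bigr)=H^{\flat}\bigl(t,T_{E,c}(x)\bigr)\in cl(E_c).$$
Since $T_{D,c}$ restricts to $T_{E,c}$ on $cl(E)$, the inverse $T_{D,c}^{-1}$ restricts to $T_{E,c}^{-1}$ on $cl(E_c)=T_{E,c}(cl(E))$. Therefore
$$H_c'(t,x)=T_{D,c}^{-1}H'^{\flat}\bigl(t,T_{D,c}(x)\bigr)=T_{E,c}^{-1}H^{\flat}\bigl(t,T_{E,c}(x)\bigr)=H_c(t,x).$$
Hence $H_c'$ extends $H_c$.
\end{proof}
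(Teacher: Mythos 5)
Your proof is correct and takes exactly the route of the paper, whose own proof of Claim \ref{extend2} is simply ``By Lemma \ref{extend} and Remark \ref{canret2}(2)''; you have just spelled out the conjugation by the canonical transformations and the (genuinely worth noting) point that $T_{D,c}^{-1}$ and $T_{E,c}^{-1}$ agree on $cl(E_c)$.
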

\begin{proof}
  By Lemma \ref{extend} and Remark \ref{canret2}(2).
\end{proof}

 \section{The proofs of Propositions 1 and 2}\label{sec-thms}

 We begin with Proposition 1.

\begin{prop}\label{main1}
Let $Y\sub R^n$ be a bounded definable set, $C\sub Y$ a linear cell, and $\cal D$ a special linear decomposition of $Y$ that contains $C$.  Assume that
$$\forall D\in \cal D,\,\, C\cap cl(D)\ne \emptyset.$$
Let $c$ be a corner of $C$, and $C'$ the $c$-half-cell of $C$. Then $Y$ deformation retracts to $C'$. In particular,  $Y$ is definably contractible.
\end{prop}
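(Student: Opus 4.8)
The strategy is to glue together the $c$-canonical retractions of $cl(D)$ to $cl(C')$ for all $D\in\cal D$, in order of dimension, to produce a single deformation retraction of $Y$ to $C'$. First I would note that by Lemma \ref{specialcorner} the corner $c$ of $C$ is a common corner of every $D\in\cal D$, since $C\cap cl(D)\ne\emptyset$; hence for each $D\in\cal D$ the $c$-canonical retraction $H^D_c:[0,q_D]\times cl(D)\to cl(D)$ of $cl(D)$ to $C'$ is defined (Definition \ref{def-cangen}, using Remark \ref{canret2}(2) to see $C_c$ is a face of $D_c$). The $c$-half-cell $C'$ is the same set for all $D$ because it depends only on $C$ and $c$, not on $D$. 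I would then enumerate $\cal D=\{D_1,\dots,D_N\}$ so that $\dim D_1\le\dim D_2\le\dots\le\dim D_N$; by Fact \ref{fact-special2}, if $D_i\cap cl(D_j)\ne\emptyset$ then $D_i\sub cl(D_j)$, and a cell contained in the closure of another has strictly smaller dimension unless they are equal, so the ordering is compatible with the "face" relation among the $D_i$'s in the sense that $D_i\sub cl(D_j)$ forces $i\le j$ (up to equal-dimension ties, which are disjoint-closure cases).

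The key point making the gluing work is the compatibility of parameter intervals and values. By Claim \ref{extend2}, whenever $D_i\sub cl(D_j)$ (equivalently $C\sub cl(D_i)\sub cl(D_j)$, using $C\sub cl(D_i)$ which holds by Fact \ref{fact-special2} applied to $C\cap cl(D_i)\ne\emptyset$), the retraction $H^{D_j}_c$ extends $H^{D_i}_c$: that is, $q_{D_i}\le q_{D_j}$ and the two maps agree on the common part of their domains. Let $q=\max_i q_{D_i}$. I would define $H:[0,q]\times Y\to Y$ as follows: for $(t,y)\in[0,q]\times Y$, let $D\in\cal D$ be the unique cell with $y\in D$, and set
$$H(t,y)=H^D_c\big(\min\{t,q_D\},\,y\big).$$
The fact that $H$ is well-defined and continuous is where I would spend the most care: continuity across the boundary between $D$ and a lower-dimensional $E\in\cal D$ with $E\sub cl(D)$ requires that $H^D_c$ restricted to $cl(E)$ coincides with the (extended) $H^E_c$, which is exactly Claim \ref{extend2}; and the truncation $\min\{t,q_D\}$ is harmless because $H^D_c(q_D,-)=1_{cl(D)}$, so freezing at time $q_D$ is consistent with the larger-dimensional cells which keep moving. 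One also needs that $H$ lands inside $Y$: for $y\in D$, $H^D_c(s,y)$ stays in $cl(D)$, but we must land in $Y=\bigsqcup\cal D$; here I would invoke Claim \ref{Hinside2}, which says $H^D_c$ restricted to $C\cup D$ is a deformation retraction of $C\cup D$ to $C'$ — more precisely, one shows by downward induction on $\dim D$ that $H^D_c(s,D)\sub\bigcup\{D'\in\cal D: D'\sub cl(D)\}\sub Y$, the point being that whenever $H^D_c$ pushes a point of $D$ onto $cl(D)\sm D$, it lands in a cell $E\sub cl(D)$ with $E\in\cal D$, and then Claim \ref{Hinside2}/Claim \ref{extend2} let us continue the motion consistently within $\cal D$.

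Finally I would check the three defining properties of a deformation retraction (Definition \ref{def-retraction}) for $H$. Property (1), $H(0,Y)=C'$: for each $D$, $H^D_c(0,-)$ maps $cl(D)$ into $cl(C')$ and $C\cup D$ into $C'$ by Claim \ref{Hinside2}, so $H(0,D)\sub C'$; and $H(0,y)=y$ for $y\in C'\sub C$ gives surjectivity onto $C'$. Property (2), $H(t,-)_{\res C'}=1_{C'}$: points of $C'$ are fixed by every $H^D_c$ (Claim \ref{Hinside2}, property (2) of the retraction it provides), and the truncation does not affect this. Property (3), $H(q,-)=1_Y$: for $y\in D$ we have $\min\{q,q_D\}=q_D$ and $H^D_c(q_D,y)=y$. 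This establishes that $Y$ deformation retracts to $C'$; since $Y$ is bounded, $C'$ is bounded as well, and by Claim \ref{contract2} (i.e.\ Lemma \ref{contract}) $C'$ is definably contractible, so composing the two deformations shows $Y$ is definably contractible. The main obstacle I anticipate is the careful verification of continuity of the glued map $H$ along the lower-dimensional cells in $cl(D)\sm D$ and the simultaneous bookkeeping that the image stays inside $Y$ rather than merely inside $\bigcup_{D}cl(D)$; both hinge on the precise "extends" statement of Claim \ref{extend2} and the "stays inside" statement of Claim \ref{Hinside2}, applied along a dimension-increasing enumeration of $\cal D$.
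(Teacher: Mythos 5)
Your proposal follows essentially the same route as the paper: glue the $c$-canonical retractions $H^D_c$ via $H(t,y)=H^D_c(\min\{t,q_D\},y)$, check image stays in $Y$ by Claim~\ref{Hinside2}, check continuity across boundaries by Fact~\ref{fact-special2} plus Claim~\ref{extend2}, and finish with Claim~\ref{contract2}. The extra machinery you describe (dimension-ordering of $\cal D$, downward induction on $\dim D$ to track where points land) is unnecessary since Claim~\ref{Hinside2} directly gives $H^D_c(t,y)\in C\cup D\sub Y$ for $y\in D$, but it does not affect correctness.
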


\begin{proof} Let $\cal D=\{D_1, \dots, D_k\}$.
By Lemma \ref{specialcorner}, $c$ is also a corner of each $D_i$. For every $i$, let
$$H^c_i:[0, q_i]\times cl(D_i)\to cl(D_i)$$
be the $c$-canonical retraction of $cl(D_i)$ to $cl(C')$.  Let $q=\max_i q_i$ and define
$$H:[0, q]\times  Y\to  Y$$
via
 $$H(t, x)=H^c_i(\min\{t, q_i\}, x),\, \text{ where $x\in D_i$}.$$
By Lemma \ref{Hinside2}, $H$ is indeed a map with image in $Y$.  Moreover, it is clear that  $H$ is  a definable  map that satisfies properties (1) - (3) from Definition \ref{def-retraction}. So we only need to prove that it is continuous. For that, we need to check that if $D_i, D_j\in \cal D$ with $D_i\cap cl(D_j)\ne \emptyset$, then $H^c_i$ and $H^c_j$ agree on  $D_i\cap cl(D_j)$. But by Fact \ref{fact-special2}, $D_i\sub cl(D_j)$,
 and hence, by Claim \ref{extend2}, $H^c_j$ extends $H^c_i$.

The last clause is by Claim \ref{contract2}.
\end{proof}

For the proof of Proposition 2 we will need the following lemma.

\begin{lemma}\label{formain2}
Let $Y\sub R^n$ be a definable set and $\cal C$  a special linear decomposition of $Y$. Then for every box $B_1\sub R^n$, there is a bigger box $B\supseteq B_1$ such that the collection
$$\cal D=\{B\cap D:\, D\in \cal C\}$$
is a special linear decomposition of $B\cap Y$.
\end{lemma}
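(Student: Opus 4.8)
The idea is to enlarge $B_1$ just enough so that the new box $B$ "cuts cleanly" through the decomposition $\cal C$, and then to verify the three conditions of Definition \ref{def-special} for $\cal D = \{B\cap D : D\in\cal C\}$ by induction on $n$. The enlargement must be chosen so that, coordinate by coordinate, the sides of $B$ avoid all the finitely many "bad" values coming from $\cal C$: the endpoints of the $1$-dimensional cells in $\pi(\cal C)$, and, after intersecting with the previously chosen sides, the values taken by the (finitely many) cell-maps of cells of $\cal C$ at the relevant corners. Concretely I would build $B = (a_1,b_1)\times\cdots\times(a_n,b_n)$ recursively: assume $\pi(B) = (a_1,b_1)\times\cdots\times(a_{n-1},b_{n-1})$ has already been chosen (containing $\pi(B_1)$) so that $\{\pi(B)\cap V : V\in\pi(\cal C)\}$ is a special linear decomposition of $\pi(B)\cap\pi(Y)$, and then choose $a_n$ small enough and $b_n$ large enough that $(a_n,b_n)\supseteq$ the $n$-th side of $B_1$ and also $(a_n,b_n)$ strictly contains the image under every cell-map $f$ (of every cell $\Gamma(f)_S$ or $(f,g)_S$ in $\cal C$) of every point of $\overline{\pi(B)}\cap\overline{S}$ — there are finitely many such linear maps, each attaining on the compact-like set $\overline{\pi(B)}\cap\overline{S}$ a bounded range, so such $a_n,b_n$ exist. (One also throws in the requirement that $a_n,b_n$ are not equal to any endpoint of any member of $\pi_n(\cal C)$, to keep $\pi_n(\cal D)$ a genuine decomposition of an interval.)

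Granting such a $B$, I would check that $\cal D$ is first of all a linear decomposition of $B\cap Y$: each $B\cap D$ is again a linear cell (intersecting a cell with a box clips its domain to a sub-box-intersection and truncates each cell-map pair $(f,g)$ to $(\max\{f,a_i\},\min\{g,b_i\})$, and the choice of $B$ guarantees these truncations are still cells with the same index, since on the relevant domain we arranged $a_i < f$, $g < b_i$, so no actual clipping of the graph/band occurs in the last coordinate beyond restricting the domain), and the $B\cap D$ partition $B\cap Y$ because the $D$ partition $Y$. Then I would verify the three special-ness conditions. Condition (1), that $\pi(\cal D) = \{\pi(B)\cap V : V\in\pi(\cal C)\}$ is special, is exactly the inductive hypothesis. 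Condition (2): given $\Gamma(f)_{B\cap S}$ and $\Gamma(g)_{B\cap T}$ in $\cal D$ and $W = \pi(B)\cap V\in\pi(\cal D)$, the trichotomy $f_{|W} < g_{|W}$, $=$, or $>$ follows from the same trichotomy for $\cal C$ on $V\supseteq W$ (the original cells $\Gamma(f)_S$, $\Gamma(g)_T$ lie in $\cal C$ and $V\in\pi(\cal C)$), since restricting to the smaller $W$ preserves a strict or equality comparison. Condition (3): given $\Gamma(h)_{B\cap S}$ and $(f',g')_{B\cap T}$ in $\cal D$, where $(f',g') = (\max\{f,a_n\},\min\{g,b_n\})$ comes from $(f,g)_T\in\cal C$, and a point $c\in\overline{B\cap S}\cap\overline{B\cap T}$ with $f'(c) < h(c) < g'(c)$: since $c$ lies in $\overline{B}$ and in $\overline{S}\cap\overline{T}$, and we are using $\pi(cl(C))=cl(\pi(C))$ from \cite[Lemma 2.8]{eep2}, one has $\pi(c)\in\overline{\pi(B)}\cap\overline{S}\cap\overline{T}$, so by our choice of $a_n,b_n$ we get $a_n \le f(\pi(c))$, wait — rather $f(\pi(c)) \in$ the open interval $(a_n,b_n)$ (strictly, by the strict choice), hence $f'(c) = f(\pi(c))$ and $g'(c) = g(\pi(c))$ near $c$; thus $f(c) < h(c) < g(c)$, contradicting condition (3) for $\cal C$.

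The main obstacle I anticipate is condition (3), i.e. making sure the truncated cell-maps $f' = \max\{f,a_n\}$ and $g' = \min\{g,b_n\}$ never genuinely interfere — that on the closure of each truncated domain the constant "wall" values $a_n,b_n$ stay strictly outside the range of the original cell-maps $f,g$. This is precisely what the careful recursive choice of the sides of $B$ buys us, but it requires being attentive to closures: one needs the finitely many linear maps to have bounded image on $\overline{\pi(B)}\cap\overline{S}$ (true since linear maps on a bounded set have bounded range, and $\overline{\pi(B)}$ is bounded by construction, and $\overline{\pi(B)}\cap\overline{S} = \overline{\pi(B)\cap S}$ by the same projection-closure lemma once $\pi(B)\cap S\neq\emptyset$), and one must pick $a_n$ below the infimum and $b_n$ above the supremum of all of these ranges simultaneously. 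A minor secondary point is checking that $\pi_m(\cal D) = \pi_m(B)\cap\pi_m(\cal C)$ for all $m$, so that the recursion is coherent and indices are preserved; this is routine from the definitions of $\pi_m$ and of the truncation. Everything else is bookkeeping that parallels the proof of Fact \ref{fact-special1} in the Appendix.
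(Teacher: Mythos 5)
Your proof follows essentially the same route as the paper's: proceed by induction on $n$, enlarging the box so that its last-coordinate interval strictly contains the images of all cell-maps of $\cal C$ over the previously chosen box $\pi(B)$, so that intersecting with $B$ introduces only the constant ``wall'' values as new cell-maps (replacing $\pm\infty$), after which conditions (2) and (3) of Definition \ref{def-special} for $\cal D$ reduce to the corresponding conditions for $\cal C$. The argument is correct and matches the paper's proof in substance.
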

\begin{proof}
By induction on $n$. For $n=1$, let  $B=B_1=(a, b)$, where $a, b\in R$. Then $\cal D$ is a linear  decomposition of $B\cap Y$ and hence is special.

Let $n>1$, and assume that $B_1=\pi(B_1)\times (d, e)$. By induction, there is a box $A\sub R^{n-1}$ containing $\pi(B_1)$ such that the collection
$$\cal E=\{A \cap E: E\in \pi(\cal C)\}$$
is a special linear decomposition of $A\cap \pi(Y)$. Let $\cal F$ be the set of all linear maps that appear in the definitions of cells in $\cal C$, with images in $R$. Now take $d', e'\in R$ such that
$$d' \text{ is smaller than $d$ and all } Im f_{| A},\, \text{ for } f\in \cal F,$$
$$e' \text{ is bigger than $e$ and all }  Im f_{| A},\, \text{ for } f\in \cal F,$$
which exist since $A$ is bounded and each $f\in \cal F$ is linear. Let $B=A\times (d, e)$. So $\cal D = \{B\cap D : D \in \cal C\}$ consists of linear cells with domain in $\cal E$ and cell-maps  already in $\cal F$, plus some linear cells of the form
$$(*)\,\,\,\,\, (d', g)_V \text{ or } (f, e')_V,$$
where $f, g\in \cal F$ and $V\in \cal E$.  We prove that \cal D is a special linear decomposition of $B\cap Y$. Since $\cal E$ is special, we only need to check that:\smallskip

\noindent (**) For every two cells $D_1=\Gamma(f)_S$ and $D_2=\Gamma(g)_T$ in $\cal D$, and $V\in \pi(\cal D)=\cal E$,
$$f_{|V}<g_{|V} \text{ or } f_{|V}=g_{|V} \text{ or } f_{|V}>g_{|V}.$$

\noindent (***) For every two cells $D_1=\Gamma (h)_S, \, D_2=(f,g)_T \in \cal D$,
$$ \text{ there is no $c\in cl(S)\cap cl(T)$ such that $f(c)<h(c)<g(c)$.}$$

For (**): since $B$ is open, $f, g$ must belong to $\cal F$, whereas $V=A\cap V'$ for some $V'\in \pi(\cal C)$. Hence the result is immediate from Definition \ref{def-special}(2) for $f, g\in \cal F$ and $V'$.

For (***): again, since $B$ is open, $h\in \cal F$. So, if $f, g\in \cal F$, we already know it. If not, then by (*), either $f=d'$ or $g=e'$, say the former. So $D_2=(d', g)_T = (-\infty, g)_T\cap B$, where $(-\infty, g)_T\sub (-\infty, g)_{T'}\in \cal C$, for some $T'\in \pi(\cal C)$. Also, $\Gamma(h)_S\sub \Gamma(h)_{S'}$, for some $S'\in \pi(\cal C)$. By the choice of $d'$, $f=d'<h_{|S}$, whereas applying Definition \ref{def-special}(3) for the cells  $\Gamma(h)_{S'}$ and $(-\infty, g)_{T'}$ of \cal C, we obtain (***).
\end{proof}



We finally derive Proposition 2.

\begin{prop}\label{main2}
Let $Y\sub R^n$ be a definable set, $C\sub Y$ a linear cell, and $\cal D$ a special linear decomposition of $Y$ that contains $C$. Assume that
$$\forall D\in \cal D,\,\, C\cap cl(D)\ne \emptyset.$$
Then $Y$ is  definably simply-connected.
\end{prop}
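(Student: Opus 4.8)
\textbf{Proof plan for Proposition \ref{main2}.} The strategy is to reduce the unbounded case to the bounded one (Proposition \ref{main1}) by exhausting $Y$ with larger and larger bounded pieces that themselves satisfy the hypotheses of Proposition \ref{main1}. To show $Y$ is definably simply-connected, we must verify it is definably connected and that every definable loop in $Y$ is definably homotopic to a constant. Both facts follow once we observe that any definable loop $\gamma:[0,p]\to Y$, and indeed any definable path between two given points, has bounded image (since $[0,p]$ is definably compact and $\gamma$ is continuous), so it is contained in $B_1\cap Y$ for some box $B_1$.

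The key step is to apply Lemma \ref{formain2} to obtain a box $B\supseteq B_1$ such that $\cal D'=\{B\cap D:\, D\in\cal D\}$ is a special linear decomposition of $B\cap Y$. Here one should be slightly careful: Lemma \ref{formain2} is stated for a special linear decomposition $\cal C$ of $Y$, and our $\cal D$ is such (it contains $C$), so this applies directly. Now set $C^*=B\cap C$; this is a linear cell (after possibly further subdividing, or simply noting $B\cap C$ is again a linear cell since $B$ and $C$ are both products-of-intervals-graphs type sets — in fact one may enlarge $B$ to also contain a fixed corner $c$ of $C$, so that $c\in C^*$). The hypothesis $\forall D\in\cal D,\ C\cap cl(D)\ne\emptyset$ must be transferred to $\cal D'$: by Fact \ref{fact-special2}, $C\sub cl(D)$ for every $D\in\cal D$, so upon enlarging $B$ so that $B\cap C\ne\emptyset$ we get $C^*=B\cap C\sub B\cap cl(D)\sub cl(B\cap D)$ — the last inclusion because $B$ is open — so $C^*\cap cl(B\cap D)\ne\emptyset$ for every $B\cap D\in\cal D'$. (Cells $B\cap D$ that are empty are simply discarded.) Thus the hypotheses of Proposition \ref{main1} hold for the bounded set $B\cap Y$, the cell $C^*$, and the decomposition $\cal D'$, so $B\cap Y$ is definably contractible, hence (by Claim \ref{contractible1}) definably simply-connected.

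It remains to assemble these local conclusions. For definable connectedness: given $x,y\in Y$, choose a box $B_1$ containing both; then $x,y$ lie in the definably connected set $B\cap Y$, so are joined by a definable path in $B\cap Y\sub Y$. For simple connectedness: given a definable loop $\gamma$ in $Y$, its image lies in some $B_1\cap Y$; enlarge to $B$ as above so that $B\cap Y$ is definably contractible; then $\gamma$ is a loop in $B\cap Y$, hence definably homotopic within $B\cap Y$ — a fortiori within $Y$ — to a constant path. This completes the proof.

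\textbf{Main obstacle.} The routine-looking but genuinely delicate point is ensuring that after intersecting with the box $B$ one still has \emph{all three} ingredients of Proposition \ref{main1} simultaneously: that $\cal D'$ is special (handed to us by Lemma \ref{formain2}), that $C^*=B\cap C$ is still a single linear cell lying in $\cal D'$, and — most importantly — that the contact hypothesis $C^*\cap cl(D^*)\ne\emptyset$ survives for \emph{every} $D^*\in\cal D'$. The last is where Fact \ref{fact-special2} together with the openness of $B$ (giving $B\cap cl(D)\sub cl(B\cap D)$) does the work; without the openness of the enlarged box this step would fail, which is precisely why Lemma \ref{formain2} was phrased to produce an honest box $B$ rather than an arbitrary definable enlargement.
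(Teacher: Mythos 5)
Your proposal is correct and follows the same strategy as the paper's proof: bound the loop in a box $B_1$, apply Lemma \ref{formain2} to enlarge it to $B$ so that $\mathcal{D}'=\{B\cap D : D\in\mathcal{D}\}$ is a special linear decomposition of $B\cap Y$, invoke Proposition \ref{main1} to get contractibility of $B\cap Y=\cup\mathcal{D}'$, and conclude via Claim \ref{contractible1}. In fact you are more explicit than the paper on a detail it glosses over: the cell fed into Proposition \ref{main1} must be $C^*=B\cap C\in\mathcal{D}'$ (not $C$ itself, which may be unbounded and hence cannot lie inside the bounded set $B\cap Y$), and one must ensure $B\cap C\neq\emptyset$ by choosing $B_1$ to contain a point of $cl(C)$; you also correctly transfer the contact hypothesis via Fact \ref{fact-special2} and $B\cap cl(D)\subseteq cl(B\cap D)$ for $B$ open. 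One small caveat: your parenthetical justification that ``$B\cap C$ is a linear cell since $B$ and $C$ are both products-of-intervals-graphs type sets'' is not a valid general principle (a box can cut a linear cell into a non-cell, e.g.\ when the cell-map and a face of the box cross), and ``further subdividing'' would not help either since Proposition \ref{main1} needs a single cell; the correct reason is simply that $C\in\mathcal{D}$, so $B\cap C\in\mathcal{D}'$, and Lemma \ref{formain2} guarantees the members of $\mathcal{D}'$ are linear cells. Likewise the phrase ``so that $c\in C^*$'' is imprecise since a corner $c$ lies in $cl(C)$ rather than $C$; what you actually get is $c\in B\cap cl(C)$, and since $B$ is open this forces $B\cap C\neq\emptyset$, which is all you need. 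These are cosmetic; the argument as a whole is sound.
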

\begin{proof}
Consider a definable loop $\gamma:[0, \alpha]\to Y$.  By continuity of $\gamma$,
$Im(\gamma)$ is bounded, and hence it is contained in some box $B_1\sub R^n$ with $B\cap C\ne\emptyset$. By Lemma \ref{formain2}, there is a bigger box $B\supseteq B_1$ such that
$$\cal D' =\{B\cap D:\, D\in \cal D\}$$
is a special linear  decomposition of $B\cap Y$. We now observe that $Im(\gamma)\sub \cup \cal D'\sub Y$, and
$$\forall D\in \cal D',\,\, C\cap cl(D)\ne \emptyset.$$
Therefore, by Proposition \ref{main1}, $\cup \cal D'$ is definably contractible. Hence, by Claim \ref{contractible1}, $\gamma$ is definably homotopic in $\cup \cal D'$ to a constant loop. Therefore it  is definably homotopic in $Y$ to a constant loop.

It is also clear that $Y$ is definably connected, since $\cal D$ contains the linear cell $C$ and for every $D\in \cal D$, $C\cap cl(D)\ne \emptyset$.
\end{proof}

\section{Appendix}

\noindent\textbf{Fact \ref{fact-special1}.} \emph{Let $Y\sub R^n$ be a definable set. Then for any linear decomposition $\mathcal{D}$ of $Y$, there is a special linear decomposition $\mathcal{C}$ of $Y$ that refines $\mathcal{D}$ (that is, every linear cell in $\mathcal{D}$ is a union of linear cells in $\mathcal{C}$).}

\begin{proof}
The proof is almost word-by-word that of \cite[Lemma 2.6]{eep2}, which covered the case $Y=R^n$. By induction on $n$. For $n=1$, take $\mathcal{C}=\mathcal{D}$. Now assume that $n>1$ and the lemma holds for $n-1$. Let $\mathcal{D}$ be a linear decomposition of $Y$. Choose a finite collection $\mathcal{F}$ of linear maps $f:R^{n-1}\to R$ such that any linear map that appears in the definition of a linear cell from $\mathcal{D}$ is a restriction of some map from  $\mathcal{F}$.
Now set
\[
\mathcal{G}=\{\Gamma(f)\cap\Gamma(g): f, g\in \mathcal{F}\} \text{ and } \mathcal{G}^\prime = \{\pi (A): A\in \mathcal{G}\} \cup \pi ({\mathcal D}).
\]
Clearly, $\mathcal{G}^\prime$ is a finite collection of definable subsets of $R^{n-1}$. By the linear cell decomposition theorem and inductive hypothesis, there is a special
linear decomposition $\mathcal{C}^\prime$ of $R^{n-1}$ that partitions each member of $\mathcal{G}^\prime$.\vskip.2cm

\noindent\textbf{Claim.}
For any $f,g\in \mathcal{F}$, either $f< g$ or $f= g$ or $f> g$ on any $V\in \mathcal{C}'$.

\pf[Proof of Claim]
Let $V\in \mathcal{C}'$ and let $A=\Gamma(f)\cap \Gamma(g)$. Since $\pi(A)$ is a union of members of $\mathcal{C}'$, we have either $V\subseteq \pi (A)$ or $V\cap \pi (A)=\emptyset .$ In the first case $f=g$ on $V.$ In the second case, $V$ is a disjoint union of the open definable subsets $\{b\in V: f(b)<g(b)\}$ and $\{b\in V: g(b)<f(b)\}.$ Since $V$ is definably connected, one of the two sets is equal to $V$.
\qed
\vskip.2cm

Let $\mathcal{C}$ be the linear decomposition of $Y$ with $\pi (\mathcal{C})=\mathcal{C}'$ such that for any $V\in \mathcal{C}'$ the set of cells in $\mathcal{C}$ with domain $V$ is defined by all functions from $\mathcal{F}.$ Since $\mathcal{C}'$ refines $\pi (\mathcal{D})$, the choice of $\mathcal{F}$  and  Claim imply that $\mathcal{C}$ refines $\mathcal{D}.$

To conclude, we need to prove (2) and (3) from Definition \ref{def-special}. Item (2) simply holds by the Claim. For (3), let $(f,g)_T\in \mathcal{C}.$ Then $f,g \in \mathcal{F}$ and for any $h\in \mathcal{F}$, again from Claim, we have on $T$ either $h< f$, or $h=f$, or $h= g$ or $h> g,$ and so either $h(c)\leq f(c)$ or $g(c)\leq h(c)$, for any $c\in cl(T).$ In particular,  for any $\Gamma (h)_S\in \mathcal{C}$ there is no $c\in cl(S)\cap cl(T)$ such that $f(c)<h(c)<g(c).$
\end{proof}


\end{document}